\newtheorem{thm}{Theorem}[section]
\newtheorem{lem}[thm]{Lemma}
\newtheorem{defn}{Definition}[section]
\newtheorem{rem}{Remark}[section]
\def\real{\rm Real}
\def\IR{\mathbb{R}}
\def\Span{\mathop{\rm Span}}
\def\Diag{\mathop{\rm Diag}}
\begin{document}

\title[Generalized explicit pseudo two-step Runge-Kutta-Nystr\"{o}m methods]{Generalized explicit pseudo two-step Runge-Kutta-Nystr\"{o}m methods for solving second-order initial value problems}

\author{N. S. Hoang}

\address{Department of Mathematics, University of West Georgia, Carrollton, GA 30118, USA}

\email{nhoang@westga.edu}

\date{}

\subjclass[2000]{65L05, 65L06, 65L20, 65L60}

\keywords{
Collocation methods, variable coefficients, generalized pseudo two-step explicit RKN, non-stiff ODEs, second-order ODEs.}

\maketitle


\begin{abstract}
A class of explicit pseudo two-step Runge-Kutta-Nystr\"{o}m (GEPTRKN) methods for solving second-order initial value problems
$y'' = f(t,y,y')$, $y(t_0) = y_0$, $y'(t_0)=y'_0$  
has been studied. This new class of methods can be considered a generalized version of the class of classical explicit pseudo two-step Runge-Kutta-Nystr\"{o}m methods. 
We proved that an $s$-stage GEPTRKN method has step order of accuracy $p=s$ and stage order of accuracy $r=s$ for any set of distinct collocation parameters $(c_i)_{i=1}^s$. Super-convergence for order of accuracy of these methods can be obtained if the collocation parameters $(c_i)_{i=1}^s$ satisfy some orthogonality conditions. We proved that an $s$-stage GEPTRKN method can attain order of accuracy $p=s+2$. Numerical experiments have shown that the new methods work better than classical methods for solving non-stiff problems even on sequential computing environments. By their structures, the new methods will be much more efficient when implemented on parallel computers. 
\end{abstract}


\section{Introduction}

Consider the initial value problem
\begin{equation}
\label{sys}
y''(t) = f(t,y(t),y'(t)),\quad
y(t_0) = y_0,\quad y'(t_0) = y'_0,\qquad
t\in [t_0,\, t_0+T]
\end{equation}
where $f:[t_0,t_0+T]\times \IR\times \IR \to \IR$ and $f(t,y,z)$ is continuous with respect to $t$ and satisfies a 
Lipschitz condition with respect to $y$ and $z$.
For simplicity of notations we state equation \eqref{sys} in scalar form. However, the results in this paper remain valid when equation \eqref{sys} is in vector form. The common approach to solve numerically equation \eqref{sys} is to rewrite the 
equation as a system of first-order ODEs and use numerical methods to solve this system. Numerical methods for solving systems of first-order ODEs have been developed extensively in the literature (see, e.g., \cite{Burrage}, \cite{Butcher}, \cite{Hairer}, \cite{HW}). 
Among these methods, multistep methods and Runge-Kutta methods are the most frequently used. 
The drawback of this approach is that the sizes of the obtained systems are twice as large as the sizes of the original systems. 
Some numerical methods have been developed for solving equation \eqref{sys} directly \cite{VR}. However, to the author's knowledge no explicit {\it collocation} Runge-Kutta type method has been developed for solving equation \eqref{sys}. 

Methods that are designed to take advantage of a priori information from solutions to first-order and second-order initial value problems have also been studied considerably. These methods include exponential-fitted methods, trigonometrically-fitted methods, and functionally-fitted methods (see, e.g., \cite{Ozawa2}, \cite{Franco2}, \cite{Franco14A}, \cite{Franco14J}, \cite{simos}, \cite{nguyen1}, and \cite{nguyen3}). 

 A special form of equation \eqref{sys} that has received much of attention is the following one
\begin{equation}
\label{sys1}
y''(t) = f(t,y(t)),\quad
y(t_0) = y_0,\quad y'(t_0) = y'_0,\qquad
t\in [t_0,\, t_0+T].
\end{equation} 
Numerical methods for solving equation \eqref{sys1}, without rewriting it as a system of first-order ODEs, have been developed to a great extend in the literature (see, e.g., \cite{cong99}, \cite{cong01}, \cite{cong00}, \cite{cong91}, \cite{congminh07}). These methods are often referred to as direct methods for solving \eqref{sys1}. 
Among direct methods for solving \eqref{sys1}, Runge-Kutta-Nystr\"{o}m (RKN) methods are the most favorite one.  
An $s$-stage RKN method is defined by its Butcher-tableau as follows:
$$
\renewcommand{\arraystretch}{1.2}
\begin{array}{c|c}
\bm{c} & \bm{A} \\ \hline
        & \bm{b}^T\\
        & \bm{d}^T
\end{array},~~
\bm{A} = [a_{ij}] \in \IR^{s\times s},~~ 
\bm{b} = (b_1, ..., b_s)^T,~~ 
\bm{d} = (d_1, ..., d_s)^T,~~
\bm{c} = (c_1, ..., c_s)^T. 
$$ 
When $y_{n}$ and $y_n'$, the approximations of $y(t_n)$ and $y'(t_n)$ at the $n$-th step are available, 
the approximations of $y(t_{n+1})$ and $y'(t_{n+1})$ at the $(n+1)$-th step 
are defined by the $s$-stage RKN method with coefficients $(\bm{c},\bm{A},\bm{b},\bm{d})$ as follows
\begin{align}
y_{n+1} &= y_n + hy'_n +
h^2\sum_{j=1}^{s}b_jf(t_n+c_jh,Y_{n,j}), \label{RKNiterate}\\
y'_{n+1} &= y'_n + 
h\sum_{j=1}^{s}d_jf(t_n+c_jh,Y_{n,j}),\label{RKNiterate0}\\
Y_{n,i} &= y_n + c_ihy'_n+
h^2\sum_{j=1}^{s}a_{ij}f(t_n+c_jh,Y_{n,j}),\qquad i=1, ..., s\label{RKNstage}.
\end{align} 

If the matrix $\bm{A}$ is nonsingular then the method is called implicit as 
the stage values $(Y_{n,j})_{j=1}^s$ are defined implicitly in 
system \eqref{RKNstage}. This system is nonlinear and one has to solve for the stage values $(Y_{n,j})_{j=1}^s$ in numerical implementation by using  
Newton's method or fixed-point iterations.
When $\bm{A}$ is strictly 
lower-triangular and $c_1=0$, then corresponding method is called explicit as 
the stage values $(Y_{n,i})_{i=1}^s$ can be easily computed from the equations
\begin{equation*}
\label{explicitRKstage}
Y_{n,1} = y_n,\quad Y_{n,i} = y_n + hc_iy'_n +
h^2\sum_{j=1}^{i-1}a_{ij}f(t_n+c_jh,Y_{n,j}),\quad i=2, ..., s.
\end{equation*}
Once the stage values  $(Y_{n,j})_{j=1}^s$ are found, the numerical solutions $y_{n+1}$ and $y'_{n+1}$ are computed by \eqref{RKNiterate} and \eqref{RKNiterate0}.

\section{Explicit pseudo two-step RKN (EPTRKN) methods}
\label{sec:eptrk}

The iteration scheme \eqref{RKNiterate}--\eqref{RKNstage}
of a RKN method can be represented
in vector form as
\begin{subequations}
\begin{align}
y_{n+1} &= y_n + hy'_n+ h^2\bm{b}^Tf(\bm{e}t_n+\bm{c}h,\bm{Y}_n) \in \IR,\\
y'_{n+1} &= y'_n + h\bm{d}^Tf(\bm{e}t_n+\bm{c}h,\bm{Y}_n) \in \IR,\\
\label{eq6a}
\bm{Y}_n &= \bm{e}y_n+h\bm{c}y'_n+h^2\bm{A}f(\bm{e}t_n+\bm{c}h,\bm{Y}_n) \in \IR^s,
\end{align}
\end{subequations}
where 
$\bm{Y}_n:=(Y_{n,1},...,Y_{n,s})^T$ and 
$f(\bm{e}t_n+\bm{c}h,\bm{Y}_n) := 
(f(t_n+c_1h,Y_1), ..., f(t_n+c_sh,Y_s))^T$. As we mentioned before, implicit RKN methods require to solve nonlinear equation \eqref{eq6a} for the stage vector $\bm{Y}_n$ and this costs extra computational time. This is the case with classical collocation RKN methods as they are implicit \cite{cong91}. Due to the extra high computational cost of solving nonlinear systems for $\bm{Y}_n$, implicit Runge-Kutta and Runge-Kutta-Nystr\"{o}m methods should only be used for solving stiff problems. 
For non-stiff problems, explicit methods are computationally cheaper as the stage values $Y_{n,i}$ can be consequentially computed without solving any equation.

In \cite{cong99} a class of explicit pseudo two-step RKN methods was studied. 
The iteration scheme of an $s$-stage explicit pseudo two-step RKN (EPTRKN) method was defined as 
\begin{subequations}
\label{EPTRKN}
\begin{align}
y_{n+1} &= y_{n} + hy'_n+ h^2\bm{b}^Tf(\bm{e}t_{n}+\bm{c}h,\bm{Y}_{n}) \in \IR,\\
y'_{n+1} &= y'_{n} + h\bm{d}^Tf(\bm{e}t_{n}+\bm{c}h,\bm{Y}_{n}) \in \IR,\\
\label{EPTRKNc}
\bm{Y}_{n+1} &= \bm{e}y_{n+1} + h\bm{c}y'_{n+1}+h^2\bm{A}f(\bm{e}t_{n}+\bm{c}h,\bm{Y}_{n}) \in \IR^s,
\end{align}
\end{subequations}
where  
$y_n\approx y(t_n)$, $y'_n\approx y'(t_n)$, and 
$\bm{Y}_n=(Y_{n,1},...,Y_{n,s})^T\approx y(\bm{e}t_n+\bm{c}h)=(y(t_n+c_1h),...,y(t_n+c_sh))^T$. 
The main advantage of EPTRKN methods over implicit RKN methods is that they are explicit. Specifically, the stage vector $\bm{Y}_{n+1}$ in equation \eqref{EPTRKNc} is computed explicitly using the values of $y_n$, $y_n'$, and $\bm{Y}_{n}$ from the previous step. 
To start the scheme one needs $s$ sufficiently accurate starting values 
to define $\bm{Y}_0$ and these values can be obtained by any conventional method. 
By construction EPTRKN methods are ideally suited 
for parallel computers as the components of $f(\bm{e}t_{n}+\bm{c}h,\bm{Y}_{n})$ can be
evaluated independently in parallel computing environments. Consequently, in parallel computing environments, EPTRKN methods use only one function evaluation of $f(t_{n}+c_ih,Y_{n,i})$ per step. 

The advantage of using EPTRKN methods for solving non-stiff second-order ODEs in the special form \eqref{sys1} has been demonstrated in the literature \cite{cong99}, \cite{cong00}, \cite{cong01}. However, these methods are not applicable to non-stiff initial-value problems in the general form \eqref{sys}. Thus, our goal in this paper is to develop a new  class of methods based on EPTRKN methods for solving equation \eqref{sys}. The new methods will be called generalized EPTRKN methods (GEPTRKN) to differentiate them from the regular EPTRKN methods.

\section{Collocation generalized explicit pseudo two-step RKN (GEPTRKN) methods}
\label{sec:EPTRKN}

\subsection{Generalized EPTRKN methods}

Given the values $y_n\approx y(t_n)$, $y'_n\approx y'(t_n)$, $\bm{Y}_n\approx y(\bm{e}t_n + \bm{c}h)$, and $\bm{Y}'_n\approx y'(\bm{e}t_n + \bm{c}h)$ at the time step $t_{n}$, the approximate values $y_{n+1}$, $y'_{n+1}$, $\bm{Y}_{n+1}$, and $\bm{Y}'_{n+1}$ at the time step $t_{n+1}=t_n +h$ are computed by an $s$-stage generalized explicit pseudo two-step RKN (GEPTRKN) method with coefficients $(\bm{c}, \bm{A}, \bm{B}, \bm{b}, \bm{d})$ as follows
\begin{align*}
y_{n+1} &= y_{n} + hy'_n+ h^2\bm{b}^Tf(\bm{e}t_{n}+\bm{c}h,\bm{Y}_{n},\bm{Y}'_{n}) \in \IR,\\
y'_{n+1} &= y'_{n} + h\bm{d}^Tf(\bm{e}t_{n}+\bm{c}h,\bm{Y}_{n},\bm{Y}'_{n}) \in \IR,\\
\label{GEPTRKNc}
\bm{Y}_{n+1} &= \bm{e}y_{n+1} + h\bm{c}y'_{n+1}+h^2\bm{A}f(\bm{e}t_{n}+\bm{c}h,\bm{Y}_{n},\bm{Y}'_{n}) \in \IR^s,\\
\bm{Y}'_{n+1} &= \bm{e}y'_{n+1}+h\bm{B}f(\bm{e}t_{n}+\bm{c}h,\bm{Y}_{n},\bm{Y}'_{n}) \in \IR^s.
\end{align*}
Here $\bm{A}$ and $\bm{B}$ are square matrices of size $s\times s$ and $\bm{b}$ and $\bm{d}$ are vectors in $\mathbb{R}^s$. 
The parameters $(c_i)_{i = 1}^s$ are distinct and will be chosen later. We assume that the initial stage vectors $\bm{Y}_0$ and $\bm{Y}'_0$ 
are available at high accuracy. This can be obtained by using classical Runge-Kutta methods to solve for $\bm{Y}_0$ and $\bm{Y}'_0$. 
By construction, GEPTRKN methods share the same structure with EPTRKN methods and they are explicit.

To determine coefficients of GEPTRKN methods we first define the following operators:
\begin{equation*}
\begin{split}
\mathcal{L}(u)(t) &:= u(t+h) - u(t) - hu'(t) - h^2\bm{b}^Tu''(\bm{e}t+\bm{c}h),\\
\mathcal{M}(u)(t) &:= u'(t+h) - u'(t) - h\bm{d}^Tu''(\bm{e}t+\bm{c}h),\\
\mathcal{N}(u)(t) &:= u(\bm{e}t+\bm{c}h+\bm{e}h) - \bm{e}u(t+h) - h\bm{c}u'(t+h) - h^2\bm{A}u''(\bm{e}t+\bm{c}h),\\
\mathcal{O}(u)(t) &:= u'(\bm{e}t+\bm{c}h+\bm{e}h) - \bm{e}u'(t+h) - h\bm{B}u''(\bm{e}t+\bm{c}h).
\end{split}
\end{equation*}

\begin{defn}[Collocation GEPTRKN]
\label{def3.1}
An s-stage GEPTRKN method with coefficients $(\bm{c}, \bm{A}, \bm{B}, \bm{b}, \bm{d})$ is called a collocation GEPTRKN method if the following equation holds
\begin{equation}
\label{GEPTRKNdef}
\begin{split}
\mathcal{L}(t^{k+2}) = \mathcal{M}(t^{k+2}) = 0, \qquad \mathcal{O}(t^{k+2}) = \mathcal{N}(t^{k+2}) = \vec{\bm{0}},\qquad k=0,1,...,s-1.
\end{split}
\end{equation}
\end{defn}

From now on by GEPTRKN methods we mean {\it collocation} GEPTRKN methods.
Given the parameters $(c_i)_{i=1}^s$, the coefficients $(\bm{c},\bm{A}, \bm{B}, \bm{b}, \bm{d})$ 
of an $s$-stage GEPTRKN method can be found from the equations in \eqref{GEPTRKNdef}. 
Specifically, from the equations in \eqref{GEPTRKNdef} one obtains
\begin{equation}
\label{sysABeq}
\begin{split}
1
&= (k+2)(k+1)\bm{b}^T\bm{c}^k, \qquad k = 0,...,s-1,\\
1
&= (k+1)\bm{d}^T\bm{c}^k, \qquad k = 0,...,s-1,\\
\bm{c}^{k+2}
&= (k+2)(k+1)\bm{A}(\bm{c} - \bm{e})^k, \qquad k = 0,...,s-1,\\
\bm{c}^{k+1}
&= (k+1)\bm{B}(\bm{c} - \bm{e})^k, \qquad k = 0,...,s-1.
\end{split}
\end{equation}
System \eqref{sysABeq} can be rewritten in vector form as
\begin{equation}
\label{feq12}
\begin{split}
\begin{bmatrix}
\frac{1}{2} & \frac{1}{3\times 2} &\cdots &\frac{1}{(s+1)\times s}
\end{bmatrix}
&= \bm{b}^T\begin{bmatrix}\bm{e} &\bm{c} &\bm{c}^2 &\cdots &\bm{c}^{s-1}\end{bmatrix},\\
\begin{bmatrix}
\frac{1}{1} & \frac{1}{2} &\cdots &\frac{1}{s}
\end{bmatrix}
&= \bm{d}^T\begin{bmatrix}\bm{e} &\bm{c} &\bm{c}^2 &\cdots &\bm{c}^{s-1}\end{bmatrix},\\
\begin{bmatrix}
\frac{\bm{c}^2}{2} & \frac{\bm{c}^3}{3\times2} &\cdots &\frac{\bm{c}^{s+1}}{(s+1)\times s} 
\end{bmatrix}
&= \bm{A}\begin{bmatrix}\bm{e} &(\bm{c}-\bm{e}) &(\bm{c}-\bm{e})^2 &\cdots &(\bm{c} - \bm{e})^{s-1}\end{bmatrix},\\
\begin{bmatrix}
\frac{\bm{c}}{1} & \frac{\bm{c}^2}{2} &\cdots& \frac{\bm{c}^{s}}{s} 
\end{bmatrix}
&= \bm{B}\begin{bmatrix}\bm{e} &(\bm{c}-\bm{e}) &(\bm{c}-\bm{e})^2 &\cdots &(\bm{c} - \bm{e})^{s-1}\end{bmatrix}.
\end{split}
\end{equation}
Note that
$$
\begin{bmatrix}\bm{e} &\bm{c} &\bm{c}^2 &\cdots &\bm{c}^{s-1}\end{bmatrix} = 
\begin{bmatrix}
1  &c_1  & \cdots &c_1^{s-1}\\
1  &c_2  & \cdots &c_2^{s-1}\\
\vdots    & \vdots    & \ddots & \vdots\\
1  &c_s  & \cdots &c_s^{s-1}
\end{bmatrix}
$$
which is a Vandermonde matrix and, therefore, 
it is invertible if $(c_i)_{i=1}^s$ are distinct. By the same reason, the matrix
$$
\begin{bmatrix}\bm{e} &(\bm{c}-\bm{e}) &(\bm{c}-\bm{e})^2 &\cdots &(\bm{c}-\bm{e})^{s-1}\end{bmatrix} = 
\begin{bmatrix}
1  &(c_1-1)  & \cdots &(c_1-1)^{s-1}\\
1  &(c_2-1)  & \cdots &(c_2-1)^{s-1}\\
\vdots    & \vdots    & \ddots & \vdots\\
1  &(c_s-1)  & \cdots &(c_s-1)^{s-1}
\end{bmatrix}
$$
is also invertible if $(c_i)_{i=1}^s$ are distinct. Thus, the coefficients $(\bm{c}, \bm{A}, \bm{B}, \bm{b}, \bm{d})$ can be found uniquely by solving the linear systems in \eqref{feq12}. 

\subsection{The collocation solution}

Let $(c_i)_{i=1}^s$ be distinct values and let
$$\mathcal{P}_{s+1}:= \Span\{1,t,t^2,...,t^{s+1}\} :=
\bigg\{ \sum_{i=0}^{s+1}a_it^{i}\,\bigg{|} \, a_i \in \mathbb{R}, \, i=0,...,s+1\bigg\}.
$$
Given the values $y_n$, $y_n'$, $(Y_{n,i})_{i=1}^s$, and $(Y'_{n,i})_{i=1}^s$, we call $u(t)$ the {\em collocation solution}
if $u(t)\in \mathcal{P}_{s+1}$ and $u(t)$ satisfies the following equations
\begin{equation}
\label{interpol}
\begin{split}
u(t_n) &= y_n,\\
u'(t_n) &= y'_n,\\
u''(t_n+c_ih) &= f(t_n+c_ih,Y_{n,i},Y'_{n,i}),\qquad i = 1, ..., s.
\end{split}
\end{equation}
If such a polynomial $u(t)$ exists, then the values $y_{n+1}$, $y'_{n+1}$, $(Y_{n+1,i})_{i=1}^s$, and $(Y'_{n+1,i})_{i=1}^s$ 
at the $(n+1)$-th step are defined by
\begin{equation}
\label{iteratepoly}
\begin{split}
y_{n+1} &:= u(t_{n+1}),\quad y'_{n+1} := u'(t_{n+1}),\qquad t_{n+1}=t_n+h,\\
Y_{n+1,i} &:= u(t_{n+1}+c_ih),\quad Y'_{n+1,i} := u'(t_{n+1}+c_ih), \qquad i=1,...,s.
\end{split}
\end{equation}
Equations \eqref{interpol} and \eqref{iteratepoly} are called a {\it collocation method} for 
integrating equation \eqref{sys}. 
When $y_n$, $y'_n$, $\bm{Y}_n= (Y_{n,1}, ..., Y_{n,s})^T$, and $\bm{Y}'_n= (Y'_{n,1}, ..., Y'_{n,s})^T$  are available, 
$u(t)$ can be constructed explicitly  
through an interpolation polynomial involving $y_n$, $y'_n$, and 
$f(\bm{e}t_n+\bm{c}h,\bm{Y}_n,\bm{Y}'_n)$.
The existence and uniqueness of the collocation solution $u(t)$ satisfying \eqref{interpol} 
is justified in the following result.

\begin{lem}\label{newinter}
Suppose that the $s+2$ values $z_0,z'_0,z_1,z_2, ..., z_s$ are given 
and $(c_i)_{i=1}^s$ are distinct,  
then there exists an interpolation polynomial $\varphi \in\mathcal{P}_{s+1}$ such 
that 
\begin{equation}
\label{ae11}
\varphi(0) = z_0,\quad \varphi'(0) = z'_0,\quad \varphi''(c_ih)=z_i, \qquad i=1,...,s.
\end{equation}

\end{lem}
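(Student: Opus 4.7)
The plan is to reduce the claim to the standard Lagrange interpolation theorem by exploiting the two-level structure of the conditions: the values $\varphi(0) = z_0$ and $\varphi'(0) = z_0'$ fix the constant and linear coefficients of $\varphi$, while the remaining coefficients are governed entirely by $\varphi''$.

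First I would construct an auxiliary polynomial $\psi \in \mathcal{P}_{s-1} := \Span\{1,t,\ldots,t^{s-1}\}$ satisfying $\psi(c_i h) = z_i$ for $i=1,\ldots,s$. Since the $(c_i)_{i=1}^s$ are distinct and $h \neq 0$, the nodes $(c_i h)_{i=1}^s$ are also distinct, so $\psi$ exists (and is unique) by classical Lagrange interpolation, or equivalently by the invertibility of the associated Vandermonde matrix already used in the previous subsection.

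Next, define
\[
\varphi(t) := z_0 + z_0' t + \int_0^t \int_0^{\tau} \psi(\sigma)\, d\sigma\, d\tau.
\]
By construction $\varphi(0) = z_0$, $\varphi'(0) = z_0'$, and $\varphi''(t) = \psi(t)$, so $\varphi''(c_i h) = z_i$ for every $i=1,\ldots,s$, verifying \eqref{ae11}. Since $\psi$ has degree at most $s-1$, double integration produces a polynomial of degree at most $s+1$, so $\varphi \in \mathcal{P}_{s+1}$ as required.

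Although the lemma only asks for existence, uniqueness comes for free from a dimension count: $\mathcal{P}_{s+1}$ has dimension $s+2$ and the $s+2$ conditions in \eqref{ae11} are linear, so it suffices to observe that the homogeneous problem has only the trivial solution. Indeed, if $\varphi \in \mathcal{P}_{s+1}$ satisfies $\varphi(0) = \varphi'(0) = 0$ and $\varphi''(c_i h) = 0$ for all $i$, then $\varphi'' \in \mathcal{P}_{s-1}$ vanishes at $s$ distinct points and hence vanishes identically, forcing $\varphi$ to be linear, and the two initial conditions then give $\varphi \equiv 0$. There is no real obstacle in this argument; the only care needed is in matching the degree counts, since $\mathcal{P}_{s+1}$ must be large enough to host both the two affine degrees of freedom and the $s$ second-derivative conditions.
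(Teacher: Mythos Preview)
Your argument is correct, but it proceeds along a different line from the paper. The paper writes $\varphi(t)=\sum_{i=0}^{s+1}a_it^i$ in the monomial basis, assembles the $(s+2)\times(s+2)$ linear system for the coefficients $a_i$ arising from the conditions \eqref{ae11}, and then computes the determinant of the coefficient matrix directly: after two cofactor expansions along the first two rows, what remains is (up to a nonzero factor $s!(s+1)!\,h^{s(s-1)/2}$) the Vandermonde determinant in the nodes $c_1,\ldots,c_s$, which is nonzero because the $c_i$ are distinct. Existence and uniqueness then follow in one stroke from invertibility.

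Your route is more structural: you decouple the problem by first interpolating $\varphi''$ at the $s$ distinct nodes $c_ih$ via classical Lagrange interpolation, and then recover $\varphi$ by two antiderivatives, using the freedom in the integration constants to match $\varphi(0)$ and $\varphi'(0)$. This is arguably cleaner conceptually, since it makes transparent why the $s+2$ conditions split naturally into $s$ conditions on $\varphi''$ and two initial conditions, and it avoids any explicit determinant computation. The paper's approach, on the other hand, yields the explicit coefficient matrix and its determinant, which can be useful if one later needs quantitative control on the conditioning of the interpolation problem. Both arguments ultimately rest on the same Vandermonde invertibility, and both tacitly require $h\neq 0$ so that the nodes $c_ih$ remain distinct; this is harmless in context since $h$ is a step size.
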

\begin{proof}
Since $\varphi \in \mathcal{P}_{s+1}$, it has the following form
$$
\varphi(t)=\sum_{i=0}^{s+1}a_it^i.
$$
For this representation of $\varphi(t)$, equation \eqref{ae11}
can be written as
\begin{equation}
\label{mq1}
\begin{bmatrix}
1&0&0 & 0 & \cdots & 0\\
0&1&0 & 0  & \cdots & 0\\
0&0&2(c_1h)^0  & 6(c_1h)^1  & \cdots & (s+1)s(c_1h)^{s-1}\\
\vdots&\vdots&\vdots    & \vdots    & \ddots & \vdots\\
0&0&2(c_sh)^0  & 6(c_sh)^1  & \cdots & (s+1)s(c_sh)^{s-1}
\end{bmatrix}
\begin{bmatrix}
a_0\\
a_1\\
a_2\\
\vdots\\
a_{s+1}
\end{bmatrix}
=
\begin{bmatrix}
z_0\\
z'_0\\
z_1\\
\vdots\\
z_s
\end{bmatrix}.
\end{equation}
Using cofactor expansions and the linearity of determinants with respect to columns one can show that the determinant of the left-hand side matrix in equation \eqref{mq1} is
\begin{equation}
\label{veq36}
\begin{vmatrix}
2  &6c_1h  & \cdots &(s+1)s(c_1h)^{s-1}\\
2  &6c_2h  & \cdots &(s+1)s(c_2h)^{s-1}\\
\vdots    & \vdots    & \ddots & \vdots\\
2  &6c_sh  & \cdots &(s+1)s(c_sh)^{s-1}
\end{vmatrix} 
= s!(s+1)! h^{\frac{s(s-1)}{2}}
\begin{vmatrix}
1  &c_1  & \cdots &c_1^{s-1}\\
1  &c_2  & \cdots &c_2^{s-1}\\
\vdots    & \vdots    & \ddots & \vdots\\
1  &c_s  & \cdots &c_s^{s-1}
\end{vmatrix}.
\end{equation}
The matrix on the right-hand side of equation \eqref{veq36} is a Vandermonde matrix which is known non-singular if $(c_i)_{i=1}^s$ are distinct. Thus, the determinant of the left-hand side matrix in equation \eqref{mq1} is non-singular. This implies that equation \eqref{mq1} has a unique solution. 
Therefore, the polynomial $\varphi(t)\in \mathcal{P}_{s+1}$ satisfying \eqref{ae11} exists and is unique. 
\end{proof}

\begin{thm}
\label{collofuns}
The collocation method \eqref{interpol}--\eqref{iteratepoly} 
is equivalent to the $s$-stage GEPTRKN method with coefficients 
$(\bm{c}$,$\bm{A}$,$\bm{B}$,$\bm{b}$,$\bm{d})$ defined by \eqref{feq12}.
\end{thm}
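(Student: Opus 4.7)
The plan is to exploit the linearity of the four operators $\mathcal{L},\mathcal{M},\mathcal{N},\mathcal{O}$ together with the defining conditions \eqref{GEPTRKNdef} to show that each of them annihilates the \emph{entire} space $\mathcal{P}_{s+1}$, and then apply this to the collocation polynomial $u(t)$ furnished by Lemma \ref{newinter}.

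First I would check by direct substitution that $\mathcal{L}(1)=\mathcal{L}(t)=0$, $\mathcal{M}(1)=\mathcal{M}(t)=0$, $\mathcal{N}(1)=\mathcal{N}(t)=\vec{\bm{0}}$, and $\mathcal{O}(1)=\mathcal{O}(t)=\vec{\bm{0}}$; these are trivial because each operator is an exact Taylor identity through first order (and $\bm{e}$, $\bm{c}$ appear precisely to make $\mathcal{N}(t)$ telescope). Combined with the hypothesis that $(\bm{c},\bm{A},\bm{B},\bm{b},\bm{d})$ satisfy \eqref{GEPTRKNdef} — equivalently, that the four operators vanish on $t^2,t^3,\ldots,t^{s+1}$ — and with linearity, I obtain
\begin{equation*}
\mathcal{L}(p)=\mathcal{M}(p)=0,\qquad \mathcal{N}(p)=\mathcal{O}(p)=\vec{\bm{0}}\qquad \text{for every } p\in\mathcal{P}_{s+1}.
\end{equation*}

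Next, given the data $y_n$, $y'_n$, $\bm{Y}_n$, $\bm{Y}'_n$ at step $t_n$, Lemma \ref{newinter} (after the obvious translation $t\mapsto t-t_n$ with $z_i = f(t_n+c_ih,Y_{n,i},Y'_{n,i})$) produces a unique collocation polynomial $u\in\mathcal{P}_{s+1}$ satisfying \eqref{interpol}. I would then evaluate the four identities $\mathcal{L}(u)(t_n)=0$, $\mathcal{M}(u)(t_n)=0$, $\mathcal{N}(u)(t_n)=\vec{\bm{0}}$, $\mathcal{O}(u)(t_n)=\vec{\bm{0}}$, and substitute the collocation conditions $u(t_n)=y_n$, $u'(t_n)=y'_n$, $u''(t_n+c_ih)=f(t_n+c_ih,Y_{n,i},Y'_{n,i})$ together with the definitions \eqref{iteratepoly}, namely $u(t_{n+1})=y_{n+1}$, $u'(t_{n+1})=y'_{n+1}$, $u(t_{n+1}+c_ih)=Y_{n+1,i}$, $u'(t_{n+1}+c_ih)=Y'_{n+1,i}$. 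Reading off the four resulting scalar/vector identities yields exactly the GEPTRKN recurrences for $y_{n+1}$, $y'_{n+1}$, $\bm{Y}_{n+1}$, $\bm{Y}'_{n+1}$, which proves that the collocation method produces the same iterates as the GEPTRKN method with coefficients \eqref{feq12}. Uniqueness of the coefficients solving \eqref{feq12} (argued in the paragraph preceding the theorem via invertibility of the two Vandermonde matrices) rules out any other GEPTRKN method with the same collocation parameters, giving genuine equivalence.

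The only step that requires any care is the verification that $\mathcal{L},\mathcal{M},\mathcal{N},\mathcal{O}$ vanish on the two extra basis elements $1$ and $t$ that lie in $\mathcal{P}_{s+1}$ but are not covered by \eqref{GEPTRKNdef}; once that observation is in hand, the rest of the argument is a direct substitution. No further identities beyond linearity of the operators and the interpolation lemma are needed.
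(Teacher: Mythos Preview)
Your proposal is correct and follows essentially the same route as the paper: both arguments observe that the operators $\mathcal{L},\mathcal{M},\mathcal{N},\mathcal{O}$ annihilate every monomial $t^i$ for $i=0,\ldots,s+1$ (hence all of $\mathcal{P}_{s+1}$ by linearity), apply this to the collocation polynomial $u$ supplied by Lemma~\ref{newinter}, and substitute the conditions \eqref{interpol}--\eqref{iteratepoly} to recover the GEPTRKN recurrences. You are in fact slightly more explicit than the paper about the trivial cases $i=0,1$, which the paper simply asserts without comment.
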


\begin{proof}
By Lemma~\ref{newinter} there exists a unique interpolation
polynomial $u(t)\in \mathcal{P}_{s+1}$ such that 
\begin{equation}
\label{mr1}
u(t_{n})=y_{n},\quad u'(t_{n})=y'_{n},\quad 
 u''(t_{n}+c_ih)=f(t_{n}+ c_ih,Y_{n,i},Y'_{n,i}),\qquad i=1,...,s. 
\end{equation}
This polynomial $u(t)$ is the collocation solution satisfying equation \eqref{interpol}. Let us verify that if we use $u(t)$ to generate the quantities in \eqref{iteratepoly},
then the following equations hold
\begin{equation}
\label{eq10}
\begin{split}
y_{n+1} &= y_{n} +hy'_n+ h^2\bm{b}^Tf(\bm{e}t_{n}+\bm{c}h,\bm{Y}_{n},\bm{Y}'_{n}),\\
y'_{n+1} &= y'_n+ h\bm{d}^Tf(\bm{e}t_{n}+\bm{c}h,\bm{Y}_{n},\bm{Y}'_{n}),\\
\bm{Y}_{n+1} &= \bm{e}y_{n+1} +h\bm{c}y'_{n+1} + h^2\bm{A}f(\bm{e}t_{n}+\bm{c}h,\bm{Y}_{n},\bm{Y}'_{n}),\\
\bm{Y}'_{n+1} &= \bm{e}y'_{n+1} + h\bm{B}f(\bm{e}t_{n}+\bm{c}h,\bm{Y}_{n},\bm{Y}'_{n}).
\end{split}
\end{equation}
Recall that the equations in \eqref{eq10} are used to define a GEPTRKN method with coefficients $(\bm{c}$,$\bm{A}$,$\bm{B}$,$\bm{b}$,$\bm{d})$. 
Since $u(t) \in \mathcal{P}_{s+1}$, it can be represented as
\begin{equation*}
\label{phicomb}
u(t)= \sum_{i=0}^{s+1}a_it^i.
\end{equation*}
By Definition \ref{def3.1},
the coefficients $(\bm{c},  \bm{A}, \bm{B}, \bm{b},\bm{d})$ of a GEPTRKN method  
satisfy the following equations
\begin{subequations}
\begin{align}
\label{eq12a}
\varphi(t_{n}+h) &= \varphi(t_{n}) + h \varphi'(t_{n})+
h^2\bm{b}^T\varphi''(\bm{e}t_{n}+\bm{c}h),\\
\varphi'(t_{n}+h) &= \varphi'(t_{n}) +
h\bm{d}^T\varphi''(\bm{e}t_{n}+\bm{c}h),\\
\varphi(\bm{e}t_n+\bm{e}h+\bm{c}h) &= \bm{e}\varphi(t_n+h) + \bm{c}h\varphi'(t_n+h)+
h^2\bm{A}\varphi''(\bm{e}t_n+\bm{c}h),\\
\varphi'(\bm{e}t_n+\bm{e}h+\bm{c}h) &= \bm{e}\varphi'(t_n+h)+
h\bm{B}\varphi''(\bm{e}t_n+\bm{c}h),
\end{align}
\end{subequations}
for $\varphi(t) = t^i$, $i = 0, ..., s+1$.
It follows from equation \eqref{eq12a} and the fact that $u(t)$ is a linear 
combination of $(t^k)_{k=0}^{s+1}$ that
\begin{align}
\label{cuoi2}
u(t_{n} + h)=u(t_{n})+hu'(t_{n})+ h^2\bm{b}^Tu''(\bm{e}t_{n}+\bm{c}h).
\end{align}
From equation \eqref{iteratepoly}, equation \eqref{mr1}, and equation
\eqref{cuoi2} one gets
\begin{align*}
\label{equivalent1}
y_{n+1} = u(t_{n}+h) = y_n + hy'_n +
h^2\bm{b}^Tf(\bm{e}t_{n}+\bm{c}h,\bm{Y}_{n},\bm{Y}'_{n}).
\end{align*}
Therefore, the first equation in \eqref{eq10} holds. The other equations in \eqref{eq10} can be obtained similarly. This completes the proof of Theorem \ref{collofuns}. 
\end{proof}

\section{Accuracy and stability properties}
\label{sec:accuracy}

\subsection{Order of accuracy}

Let us first introduce a definition for the stage order of accuracy and step order of accuracy of GEPTRKN methods.

\begin{defn}
\label{def4.1}
A GEPTRKN method is said to have step order $p=\min\{p_1,p_2\}$ 
and stage order $r=\min\{p_1,p_2,p_3,p_4\}$ if
\begin{align*}
y(t_{n+1})-y_{n+1}&=O(h^{p_1+1}),\\
y'(t_{n+1})-y'_{n+1}&=O(h^{p_2+1}),\\
y(\bm{e}t_{n+1}+\bm{c}h)-\bm{Y}_{n+1}&=O(h^{p_3+1}),\\
y'(\bm{e}t_{n+1}+\bm{c}h)-\bm{Y}'_{n+1}&=O(h^{p_4+1}),
\end{align*}
given that
$y_{n} = y(t_{n})+O(h^{p_1+1})$, $y'_{n} = y'(t_{n})+O(h^{p_2+1})$, $y(\bm{e}t_{n}+\bm{c}h)-\bm{Y}_{n}=O(h^{p_3+1})$, and $y'(\bm{e}t_{n}+\bm{c}h)-\bm{Y}'_{n}=O(h^{p_4+1})$.
\end{defn}

\begin{rem}\label{smooth1}\rm 
In \cite[Remark 3.1]{nguyen2} it was showed that
if a function $f(t)\in C^{m+n}[t_0,\,t_0+T]$ satisfies  
$f(t_i)=0,i=1,...,n$, then there exists $g(t)\in C^{m}[t_0,\,t_0+T]$ such that 
$f(t)=g(t)\prod_{i=1}^n(t-t_i)$. This result will be used for our study of order of accuracy of GEPTRKN methods below. 
\end{rem}

\begin{thm}\label{genorder}
An s-stage GEPTRKN method has stage order $r=s$ and 
step order $p=s$ for any set of collocation parameters $(c_i)_{i=1}^s$.
\end{thm}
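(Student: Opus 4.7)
The plan is to use the collocation characterization in Theorem~\ref{collofuns} to reduce the question to comparing the collocation polynomial $u(t)\in\mathcal{P}_{s+1}$ generated at step $n$ with a reference polynomial $v\in\mathcal{P}_{s+1}$ built from the \emph{exact} solution. By Theorem~\ref{collofuns} the outputs $y_{n+1},y'_{n+1},\bm{Y}_{n+1},\bm{Y}'_{n+1}$ coincide with $u(t_{n+1}),u'(t_{n+1}),u(\bm{e}t_{n+1}+\bm{c}h),u'(\bm{e}t_{n+1}+\bm{c}h)$, so I only need to bound $y-u$ and $y'-u'$ on $[t_n,t_n+(1+\max_i c_i)h]$ under the hypothesis (the relevant case $p_1=p_2=p_3=p_4=s$) that each of $y_n-y(t_n),\,y'_n-y'(t_n),\,\bm{Y}_n-y(\bm{e}t_n+\bm{c}h),\,\bm{Y}'_n-y'(\bm{e}t_n+\bm{c}h)$ is $O(h^{s+1})$.

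First I would introduce $v\in\mathcal{P}_{s+1}$ as the unique polynomial (Lemma~\ref{newinter}) defined by $v(t_n)=y(t_n)$, $v'(t_n)=y'(t_n)$, and $v''(t_n+c_ih)=y''(t_n+c_ih)$ for $i=1,\ldots,s$, and split $y-u=(y-v)+(v-u)$. For the first term, $v''$ has degree $\le s-1$ and interpolates $y''$ at the $s$ distinct nodes $t_n+c_ih$, so by Remark~\ref{smooth1} applied to $y''-v''$ (equivalently, the classical interpolation remainder) one obtains $y''(t)-v''(t)=O(h^s)$ uniformly on the interval of interest. Since $y-v$ and $(y-v)'$ vanish at $t_n$, two integrations give $y'(t)-v'(t)=O(h^{s+1})$ and $y(t)-v(t)=O(h^{s+2})$.

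Next I would control $v-u$ by writing $w(t):=u(t)-v(t)=\sum_{k=0}^{s+1}\alpha_k(t-t_n)^k$. Comparing the interpolation data defining $u$ and $v$, the vector $(\alpha_0,\alpha_1,\alpha_2,\ldots,\alpha_{s+1})^T$ satisfies precisely the system~\eqref{mq1} with right-hand side $(y_n-y(t_n),\,y'_n-y'(t_n),\,r_1,\ldots,r_s)^T$, where $r_i:=f(t_n+c_ih,Y_{n,i},Y'_{n,i})-y''(t_n+c_ih)$. The Lipschitz condition on $f$ combined with the incoming bounds yields $r_i=O(h^{s+1})$, and the first two entries are already $O(h^{s+1})$. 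Factoring the column scaling $\Diag(1,1,1,h,h^2,\ldots,h^{s-1})$ out of the matrix in~\eqref{mq1} leaves an $h$-independent invertible matrix (a Vandermonde in $\bm{c}$ scaled by the factors $2,6,\ldots,(s+1)s$), from which $\alpha_0,\alpha_1=O(h^{s+1})$ and $\alpha_k=O(h^{s+3-k})$ for $k=2,\ldots,s+1$. Evaluating $w(t_n+\theta h)$ and $w'(t_n+\theta h)$ for $\theta=O(1)$ then shows $w=O(h^{s+1})$ and $w'=O(h^{s+1})$, the dominant contributions coming from $\alpha_0$ and $\alpha_1$.

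Combining the two estimates at $t=t_{n+1}$ and $t=t_{n+1}+c_ih$ gives the four required error bounds of order $O(h^{s+1})$, proving $p=r=s$. The hardest part will be the $h$-power bookkeeping for $v-u$: the matrix in~\eqref{mq1} has determinant of order $h^{s(s-1)/2}$, so one must verify that this apparent near-singularity as $h\to 0$ is exactly cancelled by the $h$-structure on the right-hand side; the column-scaling argument above makes that cancellation transparent.
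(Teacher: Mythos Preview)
Your argument is correct, and it takes a genuinely different route from the paper's proof. The paper does not split through an auxiliary polynomial $v$; instead it works directly with $R(t)=u(t)-y(t)$. After reducing (without loss of generality) to $y_n=y(t_n)$, $y'_n=y'(t_n)$, it observes that $R(0)=R'(0)=0$ and $R''(c_ih)=O(h^{s+1})$, then invokes Remark~\ref{smooth1} to write $R''(t)=g(t)\prod_{i=1}^s(t-c_ih)+O(h^{s+1})$ for a smooth $g$, and integrates twice. Your decomposition $y-u=(y-v)+(v-u)$ followed by a linear-algebra estimate on the coefficients of $v-u$ is more elementary in that it avoids the factorization device of Remark~\ref{smooth1} and does not need the WLOG normalisation of $y_n,y'_n$; it also makes the cancellation of the $h^{s(s-1)/2}$ determinant completely transparent via the column scaling. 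What the paper's approach buys, however, is an explicit integral representation of the local error (equations of the type $R'(h)=h^{s+1}\int_0^1 g(\xi h)\prod(\xi-c_i)\,d\xi+O(h^{s+2})$), and these formulas are exactly what drive the superconvergence Theorems~\ref{super1} and~\ref{super2}. Your coefficient bounds $\alpha_k=O(h^{s+3-k})$ prove the present theorem cleanly but would not by themselves reveal the orthogonality conditions, so if you plan to continue to the superconvergence results you would eventually need to recover the integral form of the remainder.
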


\begin{proof}
Without loss of generality we assume that $t_{n}=0$. 
In addition, we can assume that $y_n = y(t_n)$, $y'_n = y'(t_n)$, $y(\bm{c}h)-\bm{Y}_{n} = O(h^{s+1})$ and 
$y'(\bm{c}h)-\bm{Y}'_{n} = O(h^{s+1})$.
Let $u(t)\in \mathcal{P}_{s+1}$ be the collocation solution satisfying equation \eqref{interpol}, i.e., 
\begin{equation}
\label{ae9}
u(t_{n})=y_{n},\quad u'(t_{n})=y'_{n},\quad
u''(\bm{c}h)=f(\bm{c}h,\bm{Y}_{n},\bm{Y}'_n).
\end{equation}
This collocation solution exists by Lemma~\ref{newinter}  
and from equation \eqref{iteratepoly} we have
$$
y_{n+1} = u(t_{n+1}),\quad y'_{n+1}=u'(t_{n+1}),\quad \bm{Y}_{n+1}=u(\bm{e}h+\bm{c}h),\quad 
\bm{Y}'_{n+1}=u'(\bm{e}h+\bm{c}h).
$$
It follows from the equation $y''(t) = f(t,y(t),y'(t))$, $\forall t\in [t_0,t_0 + T]$, the third equation in \eqref{ae9}, the local assumption 
$\bm{Y}_{n}-y(\bm{c}h) = O(h^{s+1})$, $\bm{Y}'_{n}-y'(\bm{c}h) = O(h^{s+1})$,
and Taylor expansions of $f(t,y,z)$ with respect to $y$ and $z$ that
\begin{equation}
\label{ae10}
\begin{split}
u''(\bm{c}h)-y''(\bm{c}h)&=
f(\bm{c}h,\bm{Y}_{n},\bm{Y}'_n)-f(\bm{c}h,y(\bm{c}h),y'(\bm{c}h))\\
&=
C(\bm{Y}_{n}  - y(\bm{c}h)) + D(\bm{Y}'_n-y'(\bm{c}h)) + O(h^{s+1})
= O(h^{s+1})
\end{split}
\end{equation}
where $C$ and $D$ are constants independent of $h$. 
Define 
\begin{equation}
\label{feq18}
R(t):=u(t)-y(t),\qquad 0\le t\le t_0 + T. 
\end{equation}
It follows from equations \eqref{ae9}, \eqref{ae10}, and \eqref{feq18} that
$$
R(0)=0,\qquad R'(0)=0,\qquad R''(c_ih)=u''(c_ih) - y''(c_ih) =O(h^{s+1}),\qquad i=1,...,s.
$$
Since $R''(t)$ is sufficiently smooth and $R''(c_ih)=O(h^{s+1})$, $i=1,...,s$, there exists a sufficiently smooth function $w(t)$
such that 
\begin{equation}
\label{m20a}
R''(t)=w(t)+O(h^{s+1}),\qquad  w(c_ih)=0,\qquad i=1,..,s.
\end{equation}
Since $w(c_ih)=0$, $i=1,...,s$, there exists a sufficiently smooth function $g(t)$, as we mentioned in Remark~\ref{smooth1}, such that
$$
w(t) = g(t)\prod_{i=1}^s(t-c_ih).
$$
This and equation \eqref{m20a} imply
\begin{equation}
\label{eq14}
R''(t) = g(t)\prod_{i=1}^s(t-c_ih) + O(h^{s+1}).
\end{equation}
Integrate equation \eqref{eq14} from 0 to $x$ and use the relation $R'(0)=0$ to obtain
\begin{equation}
\label{eqf15}
R'(x) = \int_0^x R''(t)\, dt = \int_0^x \bigg(g(t)\prod_{i=1}^s(t-c_ih) + O(h^{s+1})\bigg)\, dt.
\end{equation}
Letting $x = h$ in equation \eqref{eqf15} and using the substitution $t=\xi h$ one gets
\begin{equation}
\label{eq16}
R'(h) = h^{s+1}\int_0^1 g(\xi h)\prod_{i=1}^s(\xi-c_i)\, d\xi + 
\int_0^{h} O(h^{s+1})\, dt. 
\end{equation}
From equation \eqref{eq16} and the relation $\int_0^{h} O(h^{s+1})\, dt = O(h^{s+2})$, one obtains
\begin{equation}
\label{ae6}
y'_{n+1}-y'(t_{n+1})
=u'(t_{n+1})-y'(t_{n+1})=R'(h) = O(h^{s+1}).
\end{equation}
Let $x = \bm{e}h + \bm{c}h$ in equation \eqref{eqf15} and then substitute $t = \xi h$ to get
\begin{equation*}
\begin{split}
R'(\bm{e}h + \bm{c}h) &= \int_{\bm{0}}^{\bm{e}h + \bm{c}h}\bigg(g(t)\prod_{i=1}^s(t-c_ih) + O(h^{s+1})\bigg)\, dt\\
&= h^{s+1}\int_{\bm{0}}^{\bm{e} + \bm{c}}g(\xi h)\prod_{i=1}^s(\xi-c_i)\, d\xi + \int_{\bm{0}}^{\bm{e}h + \bm{c}h}O(h^{s+1})\, dt.
\end{split}
\end{equation*}
This and the relation $\int_{\bm{0}}^{\bm{e}h + \bm{c}h}O(h^{s+1})\, dt = O(h^{s+2})$ imply
\begin{equation}
\label{m20d}
\bm{Y}'_{n+1} - y'(\bm{e}h+\bm{c}h) = R'(\bm{e}h + \bm{c}h) = h^{s+1}\int_{\bm{0}}^{\bm{e} + \bm{c}}g(\xi h)\prod_{i=1}^s(\xi-c_i)\, d\xi + O(h^{s+2})= O(h^{s+1}).
\end{equation}

Using the relation $R(0)=0$ and equation \eqref{eqf15}, one obtains
\begin{equation}
\label{m20b}
R(x)=\int_0^x\int_0^t R''(\xi )\,d\xi dt =\int_0^x\int_0^t\bigg{(}g(\xi)\prod_{i=1}^s(\xi -c_ih)+ O(h^{s+1})\bigg{)}\, d\xi dt .
\end{equation}
Let $x:=\omega h$ in equation \eqref{m20b} and use substitutions to get
\begin{equation}
\label{intererror}
\begin{split}
u(\omega h)-y(\omega h) = R(\omega h) &= \int_0^{\omega h}\int_0^t g(\xi)\prod_{i=1}^s(\xi -c_ih)\,d\xi dt +\int_0^{\omega h}\int_0^t O(h^{s+1})\, d\xi dt\\
&= h^{s+2}\int_0^\omega\int_0^t g(\xi h)\prod_{i=1}^s(\xi -c_i)\,d\xi dt +\int_0^{\omega h}\int_0^t O(h^{s+1})\, d\xi dt.
\end{split}
\end{equation}
It follows from equation \eqref{intererror} and the relation $\int_0^{\omega h}\int_0^t O(h^{s+1})\, d\xi dt= O(h^{s+3})$ that
\begin{align}
\label{eq14b}
\bm{Y}_{n+1}-y(\bm{e}h+\bm{c}h)
&=u(\bm{e}h+\bm{c}h)-y(\bm{e}h+\bm{c}h)=O(h^{s+2}),\\
\label{eq14c}
y_{n+1}-y(h)
&=u(h)-y(h)=O(h^{s+2}).
\end{align}
From equations \eqref{ae6}, \eqref{m20d}, \eqref{eq14b}, \eqref{eq14c}, and Definition \ref{def4.1} one concludes that 
the method has step order $p=s$ and stage order $r=s$. 

Theorem \ref{genorder} is proved. 
\end{proof}

\begin{rem}{\rm 
\label{remarka4}
It follows from equations \eqref{eq14b} and \eqref{eq14c} that
\begin{equation*}
y_{n} - y(t_n) = O(h^{s+2}),\qquad \bm{Y}_n - y(\bm{e}t_n +\bm{c}h) = O(h^{s+2}),\qquad n=1,2,...\,.
\end{equation*}
If $\bm{Y}_0$ is computed with an accuracy of $O(h^{s+2})$ at the initial step, then we have
\begin{equation}
\label{aex1}
y_{n} - y(t_n) = O(h^{s+2}),\qquad \bm{Y}_n - y(\bm{e}t_n +\bm{c}h) = O(h^{s+2}),\qquad n=0,1,...\,.
\end{equation}
Consider the following equation
\begin{equation}
\label{feq18a}
\bm{Y}'_n - y'(\bm{e}t_n  + h\bm{c}) = C_nh^{s+1}\int_{\bm{0}}^{\bm{e} +\bm{c}}  \prod_{i=1}^s(\xi-c_i)\, d\xi + O(h^{s+2}).
\end{equation}
When $n = 0$, this equation can be obtained by computing $\bm{Y}'_n$ at order of accuracy $s+1$. Also, when $n=0$, the constant $C_0$ can be set to 0 and the error is absorbed by $O(h^{s+2})$. It follows from equation \eqref{m20d} that equation \eqref{feq18a} holds for $n=1$. By induction and the arguments in the proof of Theorem \ref{genorder}, one concludes that equation \eqref{feq18a} holds true for all $n\ge 0$. 
}
\end{rem}

\subsection{Superconvergence}

\begin{thm}\label{super1}
If the collocation parameters $(c_i)_{i = 1}^s$ satisfy the equation
\begin{equation}
\label{eq16a}
\int_{0}^1\prod_{i=1}^s(\xi-c_i)\,d\xi  = 0,
\end{equation}
then the corresponding GEPTRKN method has step order of accuracy $p=s+1$. 
\end{thm}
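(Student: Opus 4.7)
The plan is to revisit the error analysis from the proof of Theorem~\ref{genorder} and sharpen only the one estimate that was preventing the step order from exceeding $s$, namely the bound on $R'(h) = u'(t_{n+1}) - y'(t_{n+1})$. In Theorem~\ref{genorder} the estimate $y_{n+1}-y(t_{n+1}) = O(h^{s+2})$ already gives $p_1 = s+1$, so the step order $p = \min\{p_1,p_2\}$ was pinned down to $s$ solely by the weaker bound $y'_{n+1}-y'(t_{n+1}) = O(h^{s+1})$. Under the orthogonality condition \eqref{eq16a}, I expect to upgrade this to $O(h^{s+2})$, which immediately yields $p_2 = s+1$ and hence $p = s+1$.

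Concretely, I would set $t_n = 0$, invoke the local assumptions from Definition~\ref{def4.1} (so that $y_n = y(t_n)$, $y'_n = y'(t_n)$, $\bm Y_n - y(\bm c h) = O(h^{s+1})$, $\bm Y'_n - y'(\bm c h) = O(h^{s+1})$), and carry over verbatim all constructions from the proof of Theorem~\ref{genorder}: the collocation solution $u$, the residual $R(t) := u(t) - y(t)$, the auxiliary functions $w(t)$ and $g(t)$ with $R''(t) = g(t)\prod_{i=1}^s (t - c_i h) + O(h^{s+1})$, and in particular the identity
\begin{equation*}
R'(h) = h^{s+1}\int_0^1 g(\xi h)\prod_{i=1}^s(\xi - c_i)\,d\xi + O(h^{s+2})
\end{equation*}
derived there. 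Everything up to this point is already proved and can simply be cited.

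The one new ingredient is a Taylor expansion of $g$ at the origin. Since $g$ is sufficiently smooth (as noted in Remark~\ref{smooth1}), I write $g(\xi h) = g(0) + h\,\xi\,\tilde g(\xi h)$ with $\tilde g$ bounded uniformly on $[0,1]$, so that
\begin{equation*}
\int_0^1 g(\xi h)\prod_{i=1}^s(\xi - c_i)\,d\xi = g(0)\int_0^1 \prod_{i=1}^s(\xi - c_i)\,d\xi + O(h).
\end{equation*}
Hypothesis \eqref{eq16a} annihilates the leading term, leaving an $O(h)$ residual, and substituting back yields $R'(h) = h^{s+1}\cdot O(h) + O(h^{s+2}) = O(h^{s+2})$. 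This gives $y'_{n+1} - y'(t_{n+1}) = O(h^{s+2})$, so $p_2 = s+1$; combined with $p_1 = s+1$ from Theorem~\ref{genorder} (which is untouched by this argument), Definition~\ref{def4.1} delivers step order $p = s+1$.

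There is really no serious obstacle here: the entire analytic machinery was set up in Theorem~\ref{genorder}, and the only genuinely new observation is that the leading-order integrand $g(0)\prod_{i=1}^s(\xi - c_i)$ integrates to zero by assumption. The mildest point to double-check is that the proof of Theorem~\ref{genorder} indeed produces a $g$ that is smooth enough to admit the first-order Taylor expansion uniformly on $[0,1]$; this follows from the smoothness of $f$ and $y$ together with the factorization guaranteed by Remark~\ref{smooth1}, so no new regularity hypothesis is needed.
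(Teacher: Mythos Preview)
Your proposal is correct and follows essentially the same approach as the paper: both recycle the identity $R'(h)=h^{s+1}\int_0^1 g(\xi h)\prod_{i=1}^s(\xi-c_i)\,d\xi+O(h^{s+2})$ from the proof of Theorem~\ref{genorder}, Taylor-expand $g(\xi h)=g(0)+O(h)$, and kill the leading term via \eqref{eq16a} to upgrade $p_2$ to $s+1$, then combine with \eqref{eq14c} for $p_1=s+1$.
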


\begin{proof}We proved in Theorem \ref{genorder} that the stage order of an $s$-stage GEPTRKN method is $r=s$ for any set of $(c_i)_{i=1}^s$. 
Let us show that if \eqref{eq16a} holds, then the step order of the corresponding method is $p=s+1$. 

From equation \eqref{eq16} and the relation $\int_0^h O(h^{s+1})\, dt = O(h^{s+2})$ one obtains
$$
y'_{n+1}-y'(t_{n+1}) = u'(t_{n+1})-y'(t_{n+1})=R'(h)=h^{s+1}\int_0^1g(\xi h)\prod_{i=1}^s(\xi -c_i)\,d\xi + O(h^{s+2}).
$$
By using the Taylor expansion $g(\xi h)=g(0)+O(\xi h)$ and orthogonality condition \eqref{eq16a}, one obtains
\begin{equation}
\label{eq16c}
y'_{n+1} -y'(t_{n+1})=h^{s+1}g(0)\int_0^1\prod_{i=1}^s(\xi -c_i)\,d\xi + O(h^{s+2}) = O(h^{s+2}).
\end{equation}
From Definition \ref{def4.1} and equations \eqref{eq14c} and \eqref{eq16c} one concludes that the stage order of the method is $p=s+1$. 
Theorem \ref{super1} is proved.  
\end{proof}

\begin{thm}\label{super2}
If an $s$-stage GEPTRKN 
method is based on collocation parameters $(c_i)_{i = 1}^s$ satisfying
\begin{equation}
\label{eq17}
\int_{0}^{1}\int_0^{1+t}\prod_{i=1}^s(\xi-c_i)\,d\xi dt = 0,\qquad
\int_{0}^1 \xi^k\prod_{i=1}^s(\xi-c_i)\, d\xi = 0,\qquad k=0,1,
\end{equation}
then the method has step order $p=s+2$. 
\end{thm}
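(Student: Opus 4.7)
I would adapt the integral analysis from the proofs of Theorems~\ref{genorder} and \ref{super1} and push one additional order of $h$ out of every remainder. Keeping the same setup---$t_n=0$, $y_n=y(0)$, $y'_n=y'(0)$, and the collocation residual $R(t):=u(t)-y(t)$---the goal becomes $R'(h)=O(h^{s+3})$ and $R(h)=O(h^{s+3})$. The essential additional input beyond Theorem~\ref{super1} is the structural description of the incoming stage errors from Remark~\ref{remarka4}: $\bm{Y}_n-y(\bm{c}h)=O(h^{s+2})$ and
\[ \bm{Y}'_n-y'(\bm{c}h)=C_n h^{s+1}\int_{\bm{0}}^{\bm{e}+\bm{c}}\prod_{i=1}^s(\xi-c_i)\,d\xi+O(h^{s+2}). \]
Taylor-expanding $f$ in its last two arguments then yields the refined identity
\[ \epsilon_j:=R''(c_jh)=C_n h^{s+1}f_z(0,y(0),y'(0))\int_0^{1+c_j}\prod_{i=1}^s(\xi-c_i)\,d\xi+O(h^{s+2}), \]
and the key point is not the size $\epsilon_j=O(h^{s+1})$ but the explicit shape of its leading term.

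Since $u\in\mathcal{P}_{s+1}$, the polynomial $u''$ has degree at most $s-1$ and so equals its own Lagrange interpolant at the collocation nodes; subtracting the Lagrange interpolant of $y''$ and invoking the standard remainder formula gives the decomposition
\[ R''(t)=E(t)+g(t)\prod_{j=1}^s(t-c_jh),\qquad E(t):=\sum_{j=1}^s L_j(t)\,\epsilon_j, \]
with $g$ smooth and $L_j$ the Lagrange basis at the nodes $c_jh$. From $R(0)=R'(0)=0$ I would write $R'(h)=\int_0^h E(t)\,dt+\int_0^h g(t)\prod_{j=1}^s(t-c_jh)\,dt$ and $R(h)=\int_0^h (h-t)R''(t)\,dt$, and bound each of the four resulting pieces. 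Both $g$-pieces reduce, after $t=\xi h$ and the Taylor expansion $g(\xi h)=g(0)+g'(0)\xi h+O(h^2)$, to linear combinations of $\int_0^1\xi^k\prod_i(\xi-c_i)\,d\xi$ for $k=0,1$, which vanish by the second set of conditions in \eqref{eq17}; the $E$-piece of $R(h)$ is already $O(h^{s+3})$ from the crude bound $\epsilon_j=O(h^{s+1})$, because $\int_0^h(h-t)L_j(t)\,dt=O(h^2)$.

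The one delicate term is the $E$-piece of $R'(h)$: it equals $h\sum_j d_j\,\epsilon_j$, where $d_j:=\int_0^1\tilde L_j(\xi)\,d\xi$ with $\tilde L_j(\xi):=L_j(\xi h)$ the Lagrange basis at the nodes $c_j$ in $[0,1]$, so the crude bound yields only $O(h^{s+2})$, and one more power of $h$ must be extracted. Substituting the leading form of $\epsilon_j$ gives
\[ \sum_j d_j\,\epsilon_j=C_n h^{s+1}f_z(0,y(0),y'(0))\sum_j d_j\,q(c_j)+O(h^{s+2}),\qquad q(x):=\int_0^{1+x}\prod_{i=1}^s(\xi-c_i)\,d\xi. \]
The polynomial $q$ has degree $s+1$ in $x$, and the moment conditions $\int_0^1\xi^k\prod_i(\xi-c_i)\,d\xi=0$ for $k=0,1$ promote the $s$-point quadrature $(c_j,d_j)$ from degree-$(s-1)$ exactness to degree-$(s+1)$ exactness; hence $\sum_j d_j q(c_j)=\int_0^1 q(x)\,dx$, which vanishes by the first condition in \eqref{eq17}. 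The main obstacle is precisely this step: it demands simultaneously propagating the exact leading shape of the stage error (not just its size) through the Taylor expansion of $f$, recognizing that the nodal values assemble into evaluations of a specific degree-$(s+1)$ polynomial $q$, and combining the two groups of conditions in \eqref{eq17}---the moment conditions to boost quadrature exactness to the required degree, and the composite condition to annihilate the resulting integral.
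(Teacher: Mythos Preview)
Your argument is correct and reaches the same conclusion, but the organization differs from the paper's.  The paper does not introduce the Lagrange decomposition $R''(t)=E(t)+g(t)\prod_j(t-c_jh)$; instead it extends the nodal data to a continuous correction by writing
\[
R''(t)=g(t)\prod_{j=1}^s(t-c_jh)+Ch^{s+1}\int_0^{1+t/h}\prod_{i=1}^s(\xi-c_i)\,d\xi+O(h^{s+2}),
\]
that is, it replaces your $E(t)$ by the degree-$(s+1)$ polynomial $\phi(t):=Ch^{s+1}q(t/h)$, whose nodal values already match the leading part of $\epsilon_j$.  Integrating $\phi$ over $[0,h]$ and substituting $t=uh$ gives $Ch^{s+2}\int_0^1 q(u)\,du$ directly, which vanishes by the first condition in \eqref{eq17}; no quadrature-exactness argument is invoked.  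Your route---keeping $E$ discrete, recognizing $h^{-1}\int_0^h E=\sum_j d_j\epsilon_j$ as the interpolatory quadrature at the nodes $c_j$, and using the two moment conditions to lift its degree of exactness from $s-1$ to $s+1$ so that $\sum_j d_j\,q(c_j)=\int_0^1 q$---is more structural and makes transparent why \emph{both} groups of hypotheses in \eqref{eq17} enter the $R'(h)$ estimate.  The paper's route is shorter but hides this mechanism inside a direct integration.  The two decompositions differ only by $O(h^{s+2})$ (since $\phi(c_jh)=\epsilon_j+O(h^{s+2})$), so the corresponding $g$'s differ accordingly while all integrals agree to the required order; the $R(h)$ analysis is essentially identical in both approaches.
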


\begin{proof}
It follows from equations \eqref{aex1} and \eqref{feq18a} and Taylor expansions of $f(t,y,z)$ 
with respect to $y$ and $z$ that
\begin{equation}
\label{mq2}
f(\bm{c}h,\bm{Y}_{n},\bm{Y}'_{n})-f(\bm{c}h,y(\bm{c}h),y'(\bm{c}h))= Ch^{s+1}\int_{\bm{0}}^{\bm{e} +\bm{c}}  \prod_{i=1}^s(\xi-c_i)\, d\xi + O(h^{s+2}).
\end{equation}
Again, if $R(t):=u(t) - y(t)$, then $R(0)=0$ and $R'(0)=0$. This and equation \eqref{mq2} imply
\begin{align*}
R''(c_i h) = u''(c_i h) - y''(c_i h) &= f(c_ih, Y_{n,i},Y'_{n,i})-f(c_ih,y(c_i h),y'(c_i h))\\
&= Ch^{s+1}\int_0^{1+c_i}\prod_{i=1}^s(\xi-c_i)\, d\xi + O(h^{s+2}), \qquad i=1,...,s.
\end{align*} 
There exists a sufficiently smooth function $w(t)$ such that $w(c_ih)=0$, $i=1,...,s$, and
\begin{equation}
\label{m21c}
R''(t)=w(t) +Ch^{s+1}\int_0^{1+t/h}\prod_{i=1}^s(\xi-c_i)\, d\xi + O(h^{s+2}).
\end{equation}
 Again, since $w(c_ih)=0$, $i=1,...,s$, there exists 
a sufficiently smooth function $g(t)$ such that $w(t)=g(t)\prod_{i=1}^s(t-c_ih)$. 
This and equation \eqref{m21c} imply
\begin{equation}
\label{aex2}
R''(t) = g(t)\prod_{i=1}^s(t-c_ih) +Ch^{s+1}\int_0^{1+t/h}\prod_{i=1}^s(\xi-c_i)\, d\xi + O(h^{s+2}).
\end{equation}
From equation \eqref{aex2} and the relation $R'(0)=0$ one gets
\begin{equation}
\label{ae2}
\begin{split}
R'(x)=\int_0^xR''(t)\,dt = &\int_0^x
g(t)\prod_{i=1}^s(t -c_ih) \,dt + \int_0^x O(h^{s+2})\,dt\\
& + Ch^{s+1} \int_0^x \int_0^{1+t/h}\prod_{i=1}^s(\xi-c_i)\, d\xi\, dt.
\end{split}
\end{equation}
From equation \eqref{ae2} with $x:=h$ and the substitution $t$ by $\xi h$ in the first integral, one derives
\begin{equation}
\label{intererror2}
\begin{split}
u'(h) - y'(h)=R'(h) =& h^{s+1}\int_{0}^{1}
g(\xi h)\prod_{i=1}^s(\xi -c_i)\,d\xi + \int_0^{h}O(h^{s+2})\, dt\\
& + Ch^{s+1} \int_0^h \int_0^{1+t/h}\prod_{i=1}^s(\xi-c_i)\, d\xi\, dt.
\end{split}
\end{equation}
Use the Taylor expansion $g(\xi h)=g(0)+\xi hg'(0)+O(h^2)$ and the relation $\int_0^{h}O(h^{s+2})\, dt = O(h^{s+3})$ and then  substitute $t = hu$ in the last integral in \eqref{intererror2} to get
\begin{equation}
\label{m20e}
\begin{split}
u'(h)-y'(h)=&h^{s+1}g(0)\int_0^{1}
\prod_{i=1}^s(\xi -c_i)\,d\xi +h^{s+2}g'(0)
\int_0^{1}\xi \prod_{i=1}^s(\xi -c_i)\,d\xi  + O(h^{s+3})\\
&+ Ch^{s+2} \int_0^1 \int_0^{1+u}\prod_{i=1}^s(\xi-c_i)\, d\xi\, du.
\end{split}
\end{equation}
Equation \eqref{m20e} and orthogonality condition \eqref{eq17} imply
\begin{equation}
\label{ae4}
y'_{n+1} - y'(t_{n+1}) = u'(h)-y'(h) = O(h^{s+3}). 
\end{equation}

From equation \eqref{ae2} and the relation $R(0) = 0$, one gets
\begin{equation}
\label{eq19}
\begin{split}
R(x) =\int_0^x R'(t)\, dt= \int_0^x\int_0^t R''(\xi) \,d\xi dt=& \int_0^x\int_0^t\bigg{(}
g(\xi)\prod_{i=1}^s(\xi -c_ih)+O(h^{s+2})\bigg{)}\,d\xi dt\\
 &+ Ch^{s+1} \int_0^x\int_0^u \int_0^{1+t/h}\prod_{i=1}^s(\xi-c_i)\, d\xi\, dt\, du.
\end{split}
\end{equation}
From equation \eqref{eq19} with $x=h$, the relation $\int_0^h\int_0^t O(h^{s+2})\, d\xi dt = O(h^{s+4})$, 
the substitutions $t$ by $th$ and $\xi$ by $\xi h$ in the first integral, and the substitutions $u$ by $uh$ and $t$ by $th$ in the second integral, one gets
\begin{equation*}
\begin{split}
y_{n+1} - y (t_{n+1}) =R(h) =& h^{s+2}\int_0^1\int_0^t g(\xi h)\prod_{i=1}^s(\xi -c_i)\,d\xi dt +O(h^{s+4})\\
& + Ch^{s+3} \int_0^1\int_0^u \int_0^{1+t}\prod_{i=1}^s(\xi-c_i)\, d\xi\, dt\, du\\
=& h^{s+2}\int_0^1\int_0^t g(\xi h)\prod_{i=1}^s(\xi -c_i)\,d\xi dt  +O(h^{s+3}).
\end{split}
\end{equation*}
This, Fubini's Theorem, the Taylor expansion $g(\xi h) = g(0) + O(h)$, and orthogonality condition \eqref{eq17} imply
\begin{equation}
\label{ae7x}
\begin{split}
y_{n+1} - y (t_{n+1}) =R(h)
=& h^{s+2}\int_0^1\int_\xi^1 g(\xi h)\prod_{i=1}^s(\xi -c_i)\,dt d\xi +O(h^{s+3})\\
=& h^{s+2}\int_0^1 g(\xi h)(1-\xi)\prod_{i=1}^s(\xi -c_i)\, d\xi +O(h^{s+3}) \\
=& h^{s+2}g(0)\bigg(\int_0^1 \prod_{i=1}^s(\xi -c_i)\, d\xi - \int_0^1 \xi\prod_{i=1}^s(\xi -c_i)\, d\xi\bigg) +O(h^{s+3})  \\
=& O(h^{s+3}).
\end{split}
\end{equation}
From \eqref{ae4}, \eqref{ae7x}, and Definition \ref{def4.1}, one concludes that the corresponding GEPTRKN method has step order $p=s+2$. 

Theorem \ref{super2} is proved. 
\end{proof}


\subsection{Stability}
\label{subsec:stability}

Applying a GEPTRKN method with coefficients $(\bm{c},\bm{A},\bm{B},\bm{b},\bm{d})$ to the test equation
\begin{equation}
\label{m20f}
y'' = \mu y' + \lambda y,\qquad y(0) = y_0,\qquad y'(0) = y'_0,\qquad \lambda \le 0,\quad \mu \le 0,\qquad (\mu,\lambda)\not=(0,0),
\end{equation}
one gets
\begin{equation}
\label{v49}
\begin{split}
y_{n+1} &= y_n + hy'_n+  h^2\bm{b}^T(\mu\bm{Y}'_n + \lambda\bm{Y}_n),\\
y'_{n+1} &= y'_n + h\bm{d}^T(\mu\bm{Y}'_n + \lambda\bm{Y}_n),\\
\bm{Y}_{n+1} &= \bm{e}y_{n+1} + \bm{c}hy'_{n+1}+ h^2\bm{A}(\mu\bm{Y}'_n + \lambda\bm{Y}_n),\\
\bm{Y}'_{n+1} &= \bm{e}y'_{n+1} + h\bm{B}(\mu\bm{Y}'_n + \lambda\bm{Y}_n).
\end{split}
\end{equation}
The characteristic equation for equation \eqref{m20f} is
$$
x^2 - \mu x - \lambda = 0
$$
whose solutions are
\begin{equation}
\label{realpart1}
x_{1,2} = \frac{\mu \pm \sqrt{\mu^2 + 4\lambda}}{2}. 
\end{equation}
One can prove that $\max(\real(x_1),\real(x_2))>0$ if either $\lambda >0$ or $\mu>0$. Here, $\real(x_i)$ denotes the real part of $x_i$, $i=1,2$. 
Thus, the solution to equation \eqref{m20f} blows up 
to infinity if either $\lambda >0$ or $\mu>0$, in general. 
The solution to equation \eqref{m20f} also blows to infinity if $\mu=\lambda = 0$, in general. 
Contour plots of $\max(\real(x_1),\real(x_2))$ are included in Figure \ref{maxrealparts}. 
One can see from the figure that the maximum of real parts of $x_{1,2}$ is greater than zero if either $\lambda >0$ or $\mu>0$.
\begin{figure}
\centerline{%
\includegraphics[scale=0.51]{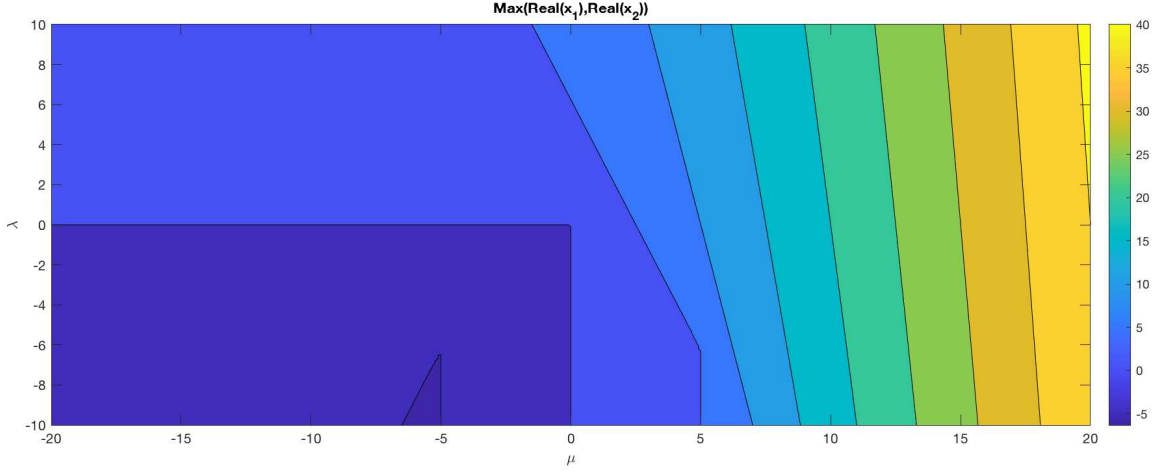}}
\caption{Contour plots of maximum of real parts of $x_{1,2}=\frac{\mu \pm \sqrt{\mu^2 + 4\lambda}}{2}$ in \eqref{realpart1}.}
\label{maxrealparts}
\end{figure}

Substitute $y_{n+1}$ and $y'_{n+1}$ from the first two equations in \eqref{v49} 
into the third equation in \eqref{v49} to get
\begin{equation}
\label{m7a}
\begin{split}
\bm{Y}_{n+1} =&\, \bm{e} \big[y_n + hy'_n+  h^2\bm{b}^T(\mu\bm{Y}'_n + \lambda\bm{Y}_n)\big] + \bm{c}h\big[y'_n + h\bm{d}^T(\mu\bm{Y}'_n + \lambda\bm{Y}_n)\big]+ h^2\bm{A}(\mu\bm{Y}'_n + \lambda\bm{Y}_n)\\
=&\, \bm{e} y_n + \big(\bm{e} + \bm{c}\big)hy'_n + h^2\lambda\big( \bm{e}\bm{b}^T + \bm{c}\bm{d}^T +  \bm{A}\big)\bm{Y}_n + h\mu \big(\bm{e}\bm{b}^T + \bm{c}\bm{d}^T +  \bm{A}\big)h\bm{Y}'_n.
\end{split}
\end{equation}
Similarly, by substituting $y'_{n+1}$ from the second equation in \eqref{v49} into the last equation in \eqref{v49} we get 
\begin{equation}
\label{m7b}
\begin{split}
h\bm{Y}'_{n+1} =&\, h\bm{e} \big[y'_n + h\bm{d}^T(\mu\bm{Y}'_n + \lambda\bm{Y}_n)\big]+ h^2\bm{B}(\mu\bm{Y}'_n + \lambda\bm{Y}_n)\\
=&\,\bm{e} hy'_n + h^2\lambda\big( \bm{e}\bm{d}^T +  \bm{B}\big)\bm{Y}_n + h\mu \big(\bm{e}\bm{d}^T +  \bm{B}\big)h\bm{Y}'_n.
\end{split}
\end{equation}
From equations \eqref{v49}, \eqref{m7a}, and \eqref{m7b} one obtains
\begin{equation*}
\label{v49.1}
\begin{bmatrix}
\bm{Y}_{n+1}\\
h\bm{Y}'_{n+1}\\
y_{n+1}\\
hy'_{n+1}
\end{bmatrix}
= \bm{M}(z,\nu)
\begin{bmatrix}
\bm{Y}_{n}\\
h\bm{Y}'_{n}\\
y_{n}\\
hy'_{n}
\end{bmatrix},\qquad z = \lambda h^2,\quad \nu = \mu h.
\end{equation*}
where
\begin{equation}
\label{f20f}
\bm{M}(z,\nu)=
\begin{bmatrix}
z\big(\bm{e}\bm{b}^T + \bm{c} \bm{d}^T + \bm{A}\big)& \nu( \bm{e}\bm{b}^T + \bm{c}\bm{d}^T + \bm{A})	&\bm{e} & \bm{e}+\bm{c}\\
z(\bm{e}\bm{d}^T + \bm{B}) &\nu(\bm{e}\bm{d}^T + \bm{B}) &\bm{0} &\bm{e}\\
z\bm{b}^T &\nu \bm{b}^T	&1 & 1\\
z\bm{d}^T  &\nu \bm{d}^T	&0 & 1\\
\end{bmatrix}.
\end{equation}

Similar to the case with classical EPTRKN methods \cite{cong99}, the stability region of GEPTRKN methods are defined as follows:

\begin{defn}
The stability region
of an $s$-stage GEPTRKN method is defined as
\begin{equation}
\label{stabregion}
S := \{(z,\nu)\not=(0,0)|\, z\le 0,\, \nu\le 0, \rho(\bm{M}(z,\nu))\leq 1\}
\end{equation}
where $\rho(\bm{M}(z,\nu))$ denotes the spectral radius of $\bm{M}(z,\nu)$ defined by equation \eqref{f20f}. 
\end{defn}

Once $(c_i)_{i=1}^s$ is chosen, the stability region of the corresponding $s$-stage GEPTRKN method can easily be studied numerically by computing the spectral radius of $\bm{M}(z,\nu)$. Stability regions of some GEPTRKN methods are investigated in Section \ref{sec:stabplot}.

\section{Extensions}
\label{sec:extensions}

We now discuss some aspects that are important to developing competitive
numerical codes.

\subsection{Variable stepsize}
\label{variablestepsize}

When the step-size $h_n$ is accepted and the next step-size $h_{n+1}$ 
is suggested, 
the values in the next step are computed by
\begin{equation}
\label{variGEPTRKN}
\begin{split}
\bm{Y}_{n+1} &= \bm{e}y_{n+1} + h_{n+1}\bm{c}y'_{n+1}+
h_{n+1}^2\bm{A}(q)f(\bm{e}t_{n}+\bm{c}h_n,\bm{Y}_{n},\bm{Y}'_{n}),\qquad q=h_{n+1}/h_n,\\
\bm{Y}'_{n+1} &= \bm{e}y'_{n+1}+
h_{n+1}\bm{B}(q)f(\bm{e}t_{n}+\bm{c}h_n,\bm{Y}_{n},\bm{Y}'_{n}),\\
y_{n+2} &= y_{n+1} + h_{n+1}y'_{n+1} +h^2_{n+1}\bm{b}^Tf(\bm{e}t_{n+1}+\bm{c}h_{n+1},\bm{Y}_{n+1},\bm{Y}'_{n+1}),\\
y'_{n+2} &= y'_{n+1}  +h_{n+1}\bm{d}^Tf(\bm{e}t_{n+1}+\bm{c}h_{n+1},\bm{Y}_{n+1},\bm{Y}'_{n+1}).
\end{split}
\end{equation}
The coefficients
 $\bm{A}(q)$ and $\bm{B}(q)$ in the first two equations in \eqref{variGEPTRKN} are 
obtained by solving the systems
\begin{equation}
\label{f20a}
\begin{split}
u(\bm{e}h_n + \bm{c}h_{n+1}) - \bm{e}u(h_n) - \bm{c}h_{n+1}u'(h_n) = h^2_{n+1}\bm{A}(q)u''(\bm{c}h_n),\qquad u = t^k,\quad k = 2,...,s+1,\\
u'(\bm{e}h_n + \bm{c}h_{n+1}) - \bm{e}u'(h_n) = h_{n+1}\bm{B}(q)u''(\bm{c}h_n),\qquad u = t^k,\quad k = 2,...,s+1.
\end{split}
\end{equation}
From the equations in \eqref{f20a}, one gets
\begin{align}
\label{f20b}
(\bm{e}h_n + \bm{c}h_{n+1})^k - \bm{e}(h_n)^k - \bm{c}h_{n+1}k(h_n)^{k-1} &= h^2_{n+1}\bm{A}(q)k(k-1)(\bm{c}h_n)^{k-2},\\
\label{f20c}
(\bm{e}h_n + \bm{c}h_{n+1})^{k-1} - \bm{e}(h_n)^{k-1}& = h_{n+1}\bm{B}(q)(k-1)(\bm{c}h_n)^{k-2}.
\end{align}
Equations \eqref{f20b} and \eqref{f20c} can be rewritten as
\begin{align}
\label{f20d}
(\bm{e} + \bm{c}q)^k - \bm{e} - k\bm{c}q &= q^2\bm{A}(q)k(k-1)\bm{c}^{k-2},\qquad k=2,...,s+1,\\
\label{f20e}
(\bm{e} + \bm{c}q)^{k-1} - \bm{e} &= q\bm{B}(q)(k-1)\bm{c}^{k-2},\qquad k=2,...,s+1.
\end{align}
Equation \eqref{f20d} can be written in vector form as follows
\begin{equation}
\label{m27a}
\bm{U} = \bm{A}(q)\begin{bmatrix} \bm{e}& \bm{c}&\cdots&\bm{c}^{s-1}\end{bmatrix}
\end{equation}
where
$$
\bm{U} = \frac{1}{q^2}\begin{bmatrix} \frac{(\bm{e} + \bm{c}q)^2 - \bm{e} - 2\bm{c}q}{2\times1}& \frac{(\bm{e} + \bm{c}q)^3 - \bm{e} - 3\bm{c}q}{3\times2}&\cdots&\frac{(\bm{e} + \bm{c}q)^{s+1} - \bm{e} - (s+1)\bm{c}q}{(s+1)\times s}\end{bmatrix}.
$$
Similarly, equation \eqref{f20e} can be written as
\begin{equation}
\label{m21b}
\bm{V} = \bm{B}(q)\begin{bmatrix} \bm{e}& \bm{c}&\cdots&\bm{c}^{s-1}\end{bmatrix}
\end{equation}
where
$$
\bm{V} = \frac{1}{q}\begin{bmatrix} \frac{(\bm{e} + \bm{c}q)^1 - \bm{e}}{1}& \frac{(\bm{e} + \bm{c}q)^2 - \bm{e}}{2}&\cdots&\frac{(\bm{e} + \bm{c}q)^{s} - \bm{e}}{s}\end{bmatrix}.
$$
The $i$-th column of the matrix $\bm{V}$ in the equation above is
\begin{equation}
\label{m21a}
\frac{(\bm{e} + \bm{c}q)^i - \bm{e}}{iq} = \frac{1}{iq}\sum_{k=1}^i{i\choose k}(\bm{c}q)^k = \sum_{k=1}^i \frac{1}{i}{i\choose k}\bm{c}^kq^{k-1}. 
\end{equation}
Denote 
$$
\bm{W} := \begin{bmatrix} \bm{c}& \bm{c}^2&\cdots&\bm{c}^{s}\end{bmatrix},\qquad \bm{\Gamma} :=  \begin{bmatrix} \bm{\alpha}_1& \bm{\alpha}_2&\cdots&\bm{\alpha}_s\end{bmatrix}
$$
where
$$
\bm{\alpha}_i = \bigg[\frac{1}{i}{i\choose 1}\quad \frac{1}{i}{i\choose 2} \, \cdots \, \frac{1}{i}{i\choose i} \quad 0 \, \cdots \, 0\bigg]^T \in \mathbb{R}^s,\qquad 1\le i\le s.
$$
Then equation \eqref{m21a} can be rewritten as
$$
\frac{(\bm{e} + \bm{c}q)^i - \bm{e}}{iq} = \bm{W} \Diag(1,q,...,q^{s-1})\bm{\alpha}_i,\qquad 1\le i\le s.
$$
Thus, $\bm{V} = \bm{W}\Diag(1,q,...,q^{s-1})\bm{\Gamma}$ and equation \eqref{m21b} can be written as
$$
 \bm{W}\Diag(1,q,...,q^{s-1})\bm{\Gamma} \begin{bmatrix} \bm{e}& \bm{c}&\cdots&\bm{c}^{s-1}\end{bmatrix}^{-1}= \bm{B}(q).
$$
This means that the matrix $\bm{B}(q)$ can be updated by a diagonal scaling. The same is true for the matrix $\bm{A}(q)$ in equation \eqref{m27a}.

The approximations $y_{n+2} \approx y(t_{n+1}+h_{n+1})$ and $y'_{n+2} \approx y'(t_{n+1}+h_{n+1})$ obtained from 
the new suggested step-size $h_{n+1}$ are subject to a local truncation error denoted by LTE 
which is often computed by using another embedded method (see 
Section~\ref{subsec:embedded} below). If the estimated error LTE
is smaller than a prescribed tolerance TOL, then $h_{n+1}$ is accepted. 
Otherwise, it is rejected and a reduced step-size $\tilde{h}_{n+1}$ is
suggested to recompute $\bm{Y}_{n+1}\approx 
y(\bm{e}t_{n+1}+\bm{c}\tilde{h}_{n+1})$, $\bm{Y}'_{n+1}\approx 
y'(\bm{e}t_{n+1}+\bm{c}\tilde{h}_{n+1})$, $y'_{n+2}\approx 
y'(t_{n+1}+\tilde{h}_{n+1})$, and $y_{n+2}\approx 
y(t_{n+1}+\tilde{h}_{n+1})$.
This process is repeated until an accepted value of $h_{n+1}$ is found.

Using a variable step-size from a collocation perspective means that, 
from the past accepted values $y_{n}$, $y'_{n}$, $\bm{Y}_n$, and $\bm{Y}'_n$ we construct the 
collocation solution $u(t)$ defined in \eqref{interpol} and then evaluate and store the values $y_{n+1}=u(t_n+h_n)$, $y'_{n+1}=u'(t_n+h_n)$.  
The acceptance of $h_{n+1}$ is subject to a local truncation error for computing $y_{n+2}$ using this step-size (cf. Section \ref{errorcontrol}). 
Therefore, the collocation solution $u(t)$ and values $y_{n+1}$, $y'_{n+1}$ remain the same when $h_{n+1}$ varies. We only
adjust the step-size $h_{n+1}$ for computing $\bm{Y}_{n+1}$ and $\bm{Y}'_{n+1}$ for the next step.
Consequently, the following generalization 
of~\cite[Theorem~2.1]{cong01} on order of accuracy of GEPTRKN remains valid.

\begin{thm}
The $s$-stage variable step-size
GEPTRKN method \eqref{variGEPTRKN} is of stage order $r=s$ and of step order at 
least $p=s$ for any set of distinct collocation points $(c_i)_{i=1}^s$. It has step order $p=s+1$, if the parameters $(c_i)_{i=1}^s$ satisfy
the orthogonality conditions
$$
\int_0^1\prod_{i=1}^s(\xi-c_i)d\xi = 0.
$$
\end{thm}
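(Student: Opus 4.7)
The plan is to recognize the variable step-size scheme \eqref{variGEPTRKN} as a collocation method and then recycle the arguments from Theorems \ref{genorder} and \ref{super1}.

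First, I would use Lemma \ref{newinter} to construct the unique polynomial $u(t)\in\mathcal{P}_{s+1}$ determined by $u(t_n)=y_n$, $u'(t_n)=y'_n$, and $u''(t_n+c_ih_n)=f(t_n+c_ih_n,Y_{n,i},Y'_{n,i})$ for $i=1,\ldots,s$. The central claim is that one step of \eqref{variGEPTRKN} is equivalent to
\begin{equation*}
y_{n+1}=u(t_n+h_n),\ \ y'_{n+1}=u'(t_n+h_n),\ \ \bm{Y}_{n+1}=u(\bm{e}t_{n+1}+\bm{c}h_{n+1}),\ \ \bm{Y}'_{n+1}=u'(\bm{e}t_{n+1}+\bm{c}h_{n+1}).
\end{equation*}
The equivalence for $y_{n+1}$ and $y'_{n+1}$ follows from Theorem \ref{collofuns} applied with fixed step $h_n$. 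For $\bm{Y}_{n+1}$ and $\bm{Y}'_{n+1}$, the defining systems \eqref{f20a} for $\bm{A}(q)$ and $\bm{B}(q)$ are precisely the identities required for $u(t)=t^k$, $k=2,\ldots,s+1$; the cases $k=0,1$ are trivial, so by linearity the identification extends to every $u\in\mathcal{P}_{s+1}$.

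Next, I would define $R(t):=u(t)-y(t)$ and replay the analysis of Theorem \ref{genorder}. Normalizing $t_n=0$ and absorbing initial errors as in that proof, the local stage assumptions $\bm{Y}_n-y(\bm{c}h_n)=O(h_n^{s+1})$ and $\bm{Y}'_n-y'(\bm{c}h_n)=O(h_n^{s+1})$, together with Taylor expansions of $f$, give $R(0)=R'(0)=0$ and $R''(c_ih_n)=O(h_n^{s+1})$. Remark \ref{smooth1} lets me write $R''(t)=g(t)\prod_{i=1}^s(t-c_ih_n)+O(h_n^{s+1})$; integrating once or twice from $0$ and evaluating at $h_n$ (resp.\ $h_n+c_ih_{n+1}$), with $q=h_{n+1}/h_n$ bounded, yields all four error components of size $O(h^{s+1})$ in a common length scale $h$. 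This gives stage order $r=s$ and step order at least $p=s$.

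For the superconvergence, the step error $R'(h_n)$ depends only on $h_n$ and on the fixed coefficients $\bm{b}^T,\bm{d}^T$, so the proof of Theorem \ref{super1} transfers unchanged: the substitution $t=\xi h_n$ together with $g(\xi h_n)=g(0)+O(h_n)$ isolates the leading contribution $h_n^{s+1}g(0)\int_0^1\prod_{i=1}^s(\xi-c_i)\,d\xi$, which vanishes by \eqref{eq16a}, leaving $R'(h_n)=O(h_n^{s+2})$. The bound $R(h_n)=O(h_n^{s+2})$ holds unconditionally by one extra integration as in \eqref{eq14c}, so $p_1,p_2\ge s+1$ and the step order is $p=s+1$. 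The main obstacle is the first step: verifying cleanly that the formulas \eqref{f20a} defining $\bm{A}(q)$ and $\bm{B}(q)$ produce exactly collocation-polynomial evaluation at the shifted nodes $\bm{e}t_{n+1}+\bm{c}h_{n+1}$. Once that identification is secured, the rest is a nearly verbatim recycling of the fixed-step proofs, with care required only in tracking which of $h_n$ or $h_{n+1}$ appears in each error estimate.
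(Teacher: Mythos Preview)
Your proposal is correct and follows exactly the approach the paper intends: the paper does not give a formal proof of this theorem but simply observes, in the paragraph preceding it, that the variable step-size scheme is still a collocation method (the same polynomial $u$ is evaluated at the shifted nodes $\bm{e}t_{n+1}+\bm{c}h_{n+1}$), so the fixed-step arguments of Theorems~\ref{genorder} and~\ref{super1} carry over verbatim. Your write-up makes this explicit, including the verification that the defining relations \eqref{f20a} for $\bm{A}(q),\bm{B}(q)$ are precisely what is needed for the collocation identification, and the observation that the step errors $R(h_n),R'(h_n)$ involve only $h_n$ and the fixed vectors $\bm{b},\bm{d}$, so the superconvergence argument is unchanged.
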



\subsection{Interpolation}
\label{interpolation}

A continuous extension of an $s$-stage GEPTRKN method determined by $(\bm{c},\bm{A},\bm{B},\bm{b},\bm{d})$ is defined  
as follows (cf. \cite{cong01}, \cite{nguyenfeptrk})
\begin{align}
\label{denseoutput}
y_{n+\xi} &= y_{n} + \xi h_ny'_n+ (\xi h_n)^2\bm{b}^T(\xi)
f(\bm{e}t_{n}+\bm{c}h,\bm{Y}_{n},\bm{Y}'_{n}),\\
\label{denseoutput1}
y'_{n+\xi} &= y'_{n} + \xi h_n\bm{d}^T(\xi)
f(\bm{e}t_{n}+\bm{c}h,\bm{Y}_{n},\bm{Y}'_{n}),\qquad 0\leq \xi \leq 1,
\end{align}
where $y_{n+\xi}\approx y(t_{n+\xi})$ and $y'_{n+\xi}\approx y'(t_{n+\xi})$ with 
$t_{n+\xi}:=t_n+\xi h_n$, and the coefficients 
$\bm{b}(\xi)$ and $\bm{d}(\xi)$ are obtained from the equations
\begin{equation*}
\begin{split}
(t_n + \xi h_n)^k &= (t_n)^k +\xi h_n k(t_n)^{k-1} + (\xi h_n)^2 \bm{b}^T(\xi)k(k-1)(\bm{e}t_n + \bm{c}h_n)^{k-2},\qquad k=2,...,s+1,\\
(t_n + \xi h_n)^{k} &= (t_n)^{k} + \xi h_n \bm{d}^T(\xi)k(\bm{e}t_n + \bm{c}h_n)^{k-1},\qquad k=1,...,s.
\end{split}
\end{equation*}
These equations are simplified to
\begin{equation}
\label{m27b}
\begin{split}
\xi^{k-2} &=  \bm{b}^T(\xi)k(k-1)\bm{c}^{k-2},\qquad k=2,...,s+1,\\
\xi^{k-1} &= \bm{d}^T(\xi)k\bm{c}^{k-1},\qquad k=1,...,s.
\end{split}
\end{equation}
The equations in \eqref{m27b} can be rewritten in vector form as follows
\begin{equation}
\label{m22a}
\begin{split}
\begin{bmatrix}
\frac{1}{2} & \frac{\xi}{3\times 2} &\cdots &\frac{\xi^{s-1}}{(s+1)s}
\end{bmatrix}
&= \bm{b}^T(\xi)\begin{bmatrix}\bm{e} &\bm{c} &\bm{c}^2 &\cdots &\bm{c}^{s-1}\end{bmatrix},\\
\begin{bmatrix}
\frac{1}{1} & \frac{\xi}{2} &\cdots &\frac{\xi^{s-1}}{s}
\end{bmatrix}
&= \bm{d}^T(\xi)\begin{bmatrix}\bm{e} &\bm{c} &\bm{c}^2 &\cdots &\bm{c}^{s-1}\end{bmatrix}.
\end{split}
\end{equation}
From their definition we have 
$y_{n+\xi}=u(t_n+\xi h_n)$ and $y'_{n+\xi}=u'(t_n+\xi h_n)$, where 
$u(t)$ is the collocation solution defined in 
\eqref{interpol}. So technically, $y_{n+\xi}$ and $y'_{n+\xi}$ are obtained as if $t_{n+\xi}=t_n+\xi h_n$
was the end point.
From \eqref{intererror} it is easy to check that
$y_{n+\xi}-y(t_n+\xi h_n)=O(h_n^{s+2})$. 
From equation \eqref{ae2} and similar substitutions as in equation \eqref{intererror2} one derives
$y'_{n+\xi}-y'(t_n+\xi h_n)=O(h_n^{s+1})$, $\forall \xi \in [0,1]$.
Hence, the following result holds.

\begin{thm} The GEPTRKN method defined by \eqref{denseoutput}--\eqref{denseoutput1} with 
$\bm{b}(\xi)$ and $\bm{d}(\xi)$ defined by \eqref{m22a} produces a continuous 
GEPTRKN method of order $s$, i.e.,
$$
y(t_n+\xi h_n)-y_{n+\xi}=O(h_n^{s+1}),\quad y'(t_n+\xi h_n)-y'_{n+\xi}=O(h_n^{s+1}),\qquad 0 \le \xi \le 1.
$$
\end{thm}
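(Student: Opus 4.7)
The plan is to reduce the continuous extension to the collocation polynomial itself and then invoke the error estimates already established in the proof of Theorem~\ref{genorder}. First I would verify that $y_{n+\xi} = u(t_n + \xi h_n)$ and $y'_{n+\xi} = u'(t_n + \xi h_n)$, where $u\in\mathcal{P}_{s+1}$ is the collocation polynomial from \eqref{interpol}. The defining equations \eqref{m27b} for $\bm{b}(\xi)$ and $\bm{d}(\xi)$ are precisely the statement that the quadrature formulas \eqref{denseoutput}--\eqref{denseoutput1} are exact on the monomials $t^k$ for $k=0,\dots,s+1$. Writing $u(t)=\sum_{i=0}^{s+1}a_i t^i$ and using linearity, the same formulas are exact on $u$, so that
\begin{equation*}
u(t_n+\xi h_n) = y_n + \xi h_n y'_n + (\xi h_n)^2 \bm{b}^T(\xi)\, u''(\bm{e}t_n+\bm{c}h_n),
\end{equation*}
and similarly for $u'(t_n+\xi h_n)$. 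The collocation condition $u''(t_n+c_i h_n)=f(t_n+c_i h_n, Y_{n,i}, Y'_{n,i})$ identifies $u''(\bm{e}t_n+\bm{c}h_n)$ with $f(\bm{e}t_n+\bm{c}h_n,\bm{Y}_n,\bm{Y}'_n)$, so the right-hand side equals $y_{n+\xi}$, and analogously for the derivative. This is the same reduction used in Theorem~\ref{collofuns}, applied at the intermediate point $t_n+\xi h_n$ instead of at $t_n+h_n$.

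Next I would set $R(t) := u(t)-y(t)$ and reuse the representation derived inside the proof of Theorem~\ref{genorder}. There it is shown that $R(0)=R'(0)=0$ and
\begin{equation*}
R''(t) = g(t)\prod_{i=1}^s(t-c_i h_n) + O(h_n^{s+1}),
\end{equation*}
with $g$ sufficiently smooth on $[0,h_n]$. Integrating from $0$ to $\xi h_n$ and substituting $t=\eta h_n$ yields
\begin{equation*}
R'(\xi h_n) = h_n^{s+1}\int_0^\xi g(\eta h_n)\prod_{i=1}^s(\eta - c_i)\,d\eta + O(h_n^{s+2}),
\end{equation*}
which is $O(h_n^{s+1})$ uniformly for $\xi\in[0,1]$, since the integrand is bounded independently of $h_n$. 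A second integration in the same manner, exactly mimicking the passage from \eqref{eq16} to \eqref{intererror}, gives $R(\xi h_n)=O(h_n^{s+2})$ uniformly in $\xi\in[0,1]$, which is in particular $O(h_n^{s+1})$.

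Combining the two pieces, $y_{n+\xi} - y(t_n+\xi h_n) = R(\xi h_n) = O(h_n^{s+1})$ and $y'_{n+\xi} - y'(t_n+\xi h_n) = R'(\xi h_n) = O(h_n^{s+1})$, uniformly in $\xi$, which is the claimed continuous order $s$. The main technical point, and the only nontrivial one, is the uniformity of the error bounds in $\xi\in[0,1]$; this reduces to the observation that $g$ is smooth and $\prod_{i=1}^s(\eta-c_i)$ is bounded on $[0,1]$, both bounds being independent of $h_n$. Everything else is a direct transcription of the machinery from Theorem~\ref{genorder}, with the endpoint $h_n$ replaced by the intermediate point $\xi h_n$.
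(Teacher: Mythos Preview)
Your proposal is correct and follows essentially the same route as the paper: identify $y_{n+\xi}$ and $y'_{n+\xi}$ with the values $u(t_n+\xi h_n)$ and $u'(t_n+\xi h_n)$ of the collocation polynomial (via the linearity argument of Theorem~\ref{collofuns}), then read off the error from the representation of $R(t)=u(t)-y(t)$ obtained in the proof of Theorem~\ref{genorder}. The paper's argument is only sketched in the paragraph preceding the theorem, citing \eqref{intererror} for the position error and the analogue of \eqref{eqf15}/\eqref{intererror2} for the derivative error; your write-up supplies the details and the uniformity-in-$\xi$ remark, but there is no substantive difference.
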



\subsection{Embedded methods}
\label{subsec:embedded}

\def\tc{\tilde{c}}
\def\ts{{\tilde{s}}}
\def\tY{\tilde{Y}}
\def\ty{\tilde{y}}
\def\tu{\tilde{u}}
\def\tbm#1{\tilde{\bm{#1}}}

Consider an $s$-stage GEPTRKN method with coefficients $(\bm{c},\bm{A},\bm{B},\bm{b},\bm{d})$.
We will construct an embedded GEPTRKN method ($\tbm{c},\tbm{A},\tbm{B},\tbm{b},\tbm{d})$ paired with the GEPTRKN method $(\bm{c},\bm{A},\bm{B},\bm{b},\bm{d})$ 
to cheaply estimate the local errors and control 
the step-size in practice.

Let $\ts < s$, $\{\tc_1, ..., \tc_\ts\}
\varsubsetneq \{c_1, ..., c_s\}$,
$\tbm{c} = (\tc_1, ..., \tc_\ts)^T$
and $\tbm{e} = (1, ..., 1)^T$ of length $\ts$. 
An embedded pair GEPTRKN methods in which 
another approximation $\ty_{n+1}$ to $y(t_{n+1})$ can 
be computed without any extra right-hand side function evaluation is defined by 
\begin{equation}
\label{embpair}
\begin{split}
y_{n+1} &= y_{n} +h_ny'_n+ h_n^2\bm{b}^Tf(\bm{e}t_{n}+\bm{c}h_n,\bm{Y}_{n},\bm{Y}'_{n}),\\
y'_{n+1} &= y'_{n} + h_n\bm{d}^Tf(\bm{e}t_{n}+\bm{c}h_n,\bm{Y}_{n},\bm{Y}'_{n}),\\
\ty_{n+1} &= y_{n} +h_ny'_n + h_n^2\tbm{b}^Tf(\tbm{e}t_{n}+\tbm{c}h_n,\tbm{Y}_{n},\tbm{Y}'_{n}),\\
\ty'_{n+1} &= y'_n + h_n \tbm{d}^Tf(\tbm{e}t_{n}+\tbm{c}h_n,\tbm{Y}_{n},\tbm{Y}'_{n}),\\
\bm{Y}_{n+1} &= \bm{e}y_{n+1} +\bm{c}h_ny'_{n+1} +  h_n^2\bm{A}f(\bm{e}t_{n}+\bm{c}h_n,\bm{Y}_{n},\bm{Y}'_{n}),\\
\bm{Y}'_{n+1} &= \bm{e}y'_{n+1} +  h_n\bm{B}f(\bm{e}t_{n}+\bm{c}h_n,\bm{Y}_{n},\bm{Y}'_{n}).
\end{split}
\end{equation}
Here
$\tbm{Y}_{n} :=(\tY_{n,1},...,\tY_{n,\ts})^T$, $\tbm{Y}'_{n}:=(\tY'_{n,1},...,\tY'_{n,\ts})^T$, 
and 
$$
f(\tbm{e}t_{n}+\tbm{c}h_n,\tbm{Y}_{n},\tbm{Y}'_{n})
:= 
(f(t_n+\tc_1h_n,\tY_{n,1},\tY'_{n,1}), ..., f(t_n+\tc_\ts h_n,\tY_{n,\ts},\tY'_{n,\ts}))^T 
$$
where $\tY_{n,i}$ and $\tY'_{n,i}$ are defined by the following rule
\begin{align*}
\text{if}\quad \tc_i=c_j\quad \text{then}\quad 
\tY_{n,i}=Y_{n,j},\quad \tY'_{n,i}=Y'_{n,j}.
\end{align*}
The coefficients $\tbm{b} = (\tilde{b}_1,...,\tilde{b}_{\tilde{s}})^T$ in \eqref{embpair} are defined as the coefficients of the GEPTRKN method 
generated from the collocation parameters $(\tilde{c}_i)_{i=1}^{\tilde{s}}$ which is a subset of $(c_i)_{i=1}^s$.
The solution $\ty_{n+1}$ is computed by using the subset 
of past stage values with indices corresponding to $\tbm{c}$.
For this definition of $\tbm{b}$ we are ensured that 
$\ty_{n+1}=y(t_{n+1})+O(h^{\ts+1})$ as a result of 
Theorem~\ref{genorder}.

\begin{thm}
An $s$-stage embedded pair GEPTRKN \eqref{embpair} 
produces numerical solutions $y_{n+1}$ and $\ty_{n+1}$ that satisfy
$$
y_{n+1}-\ty_{n+1}=O(h^{\ts+1}),
$$
for all set of collocation parameters $(c_i)_{i=1}^s$.
\end{thm}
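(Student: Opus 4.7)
The plan is to recognize that both $y_{n+1}$ and $\tilde{y}_{n+1}$ are approximations of $y(t_{n+1})$ produced by two bona fide collocation GEPTRKN methods, of stage counts $s$ and $\tilde{s}$ respectively, sharing the same past-step data, and then to combine the two error estimates supplied by Theorem~\ref{genorder} via the triangle inequality.

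First I would verify that the $\tilde{y}_{n+1}$ update in \eqref{embpair} is itself an instance of an $\tilde{s}$-stage GEPTRKN update. By construction $\tbm{b}$ is the coefficient vector produced by the collocation system \eqref{feq12} applied to the subset $(\tilde{c}_i)_{i=1}^{\tilde{s}}$, and one can complete the tuple $(\tbm{c},\tbm{A},\tbm{B},\tbm{b},\tbm{d})$ analogously so that Definition~\ref{def3.1} is satisfied for the reduced set of parameters. The past-step inputs feeding this update, $\tbm{Y}_n$ and $\tbm{Y}'_n$, are the sub-vectors of $\bm{Y}_n$ and $\bm{Y}'_n$ indexed by the positions where $\tilde{c}_i=c_j$; consequently, under the usual local assumptions on the main method,
$$
\tbm{Y}_n-y(\tbm{e}t_n+\tbm{c}h)=O(h^{s+1}),\qquad \tbm{Y}'_n-y'(\tbm{e}t_n+\tbm{c}h)=O(h^{s+1}),
$$
and since $\tilde{s}<s$ these are in particular $O(h^{\tilde{s}+1})$, which is exactly what Theorem~\ref{genorder} requires in order to be invoked on an $\tilde{s}$-stage method.

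Applying Theorem~\ref{genorder} to the $\tilde{s}$-stage embedded method then gives $\tilde{y}_{n+1}-y(t_{n+1})=O(h^{\tilde{s}+1})$, while applying it to the underlying $s$-stage method gives $y_{n+1}-y(t_{n+1})=O(h^{s+1})$. A triangle-inequality step finishes the argument:
$$
y_{n+1}-\tilde{y}_{n+1}=\bigl(y_{n+1}-y(t_{n+1})\bigr)-\bigl(\tilde{y}_{n+1}-y(t_{n+1})\bigr)=O(h^{s+1})+O(h^{\tilde{s}+1})=O(h^{\tilde{s}+1}),
$$
since $\tilde{s}<s$. There is no substantive obstacle in this argument; the only subtlety is to confirm that the sub-stages $\tbm{Y}_n,\tbm{Y}'_n$ inherit enough accuracy from $\bm{Y}_n,\bm{Y}'_n$ to satisfy the hypotheses of Theorem~\ref{genorder}, which is immediate from the subset construction of $\tbm{c}$.
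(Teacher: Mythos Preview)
Your proposal is correct and follows the same approach the paper takes: the paper does not give a formal proof after the theorem statement, but the sentence immediately preceding it (``For this definition of $\tbm{b}$ we are ensured that $\ty_{n+1}=y(t_{n+1})+O(h^{\ts+1})$ as a result of Theorem~\ref{genorder}'') is exactly your step of applying Theorem~\ref{genorder} to the $\tilde{s}$-stage embedded method, after which the triangle inequality with $y_{n+1}-y(t_{n+1})=O(h^{s+1})$ gives the result. Your explicit check that $\tbm{Y}_n,\tbm{Y}'_n$ inherit $O(h^{\tilde{s}+1})$ accuracy from the full stage vectors is a detail the paper leaves implicit.
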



\subsection{Error control and step-size change}
\label{errorcontrol}

Let us discuss a strategy for changing step-sizes in the implementation of 
a GEPTRKN method of order $p$ embedded with a GEPTRKN method of order $\tilde{p}<p$ using the variable step-size technique in Section \ref{variablestepsize}.
 At each step we compute a local truncation error LTE as follows
\begin{equation}
\label{LTE1}
\text{LTE} = \|y_{n+1} - \tilde{y}_{n+1}\| = O(h^{\tilde{p}+1}) 
\end{equation}
where $\|\cdot\|$ denotes the 2-norm. 
We opt for formula \eqref{LTE1} for computing $\text{LTE}$ instead of using a more complicated one introduced in \cite{cong01} by the formula
\begin{equation}
\label{LTE2}
\text{LTE} = \sqrt{\|y_{n+1} - \tilde{y}_{n+1}\|^2 + \|y'_{n+1} - \tilde{y}'_{n+1}\|^2}= O(h^{\tilde{p}+1}). 
\end{equation}
Here $\tilde{y}'_{n+1}$ is computed by
$$
\ty'_{n+1} = y'_{n} + h_n\tbm{d}^Tf(\tbm{e}t_{n}+\tbm{c}h_n,\tbm{Y}_{n},\tbm{Y}'_{n}),
$$
where $\tbm{Y}_{n}$ and $\tbm{Y}'_{n}$ are defined as in Section \ref{subsec:embedded} and  ($\tbm{c},\tbm{A},\tbm{B},\tbm{b},\tbm{d})$ are the coefficients of the embedded method.  

It has been observed from our experiments that using \eqref{LTE2} instead of \eqref{LTE1} for computing $\text{LTE}$ results in having smaller step-sizes, and, therefore, yields numerical solutions of higher accuracy. However, using smaller step-sizes leads to more right-hand side function evaluations. Overall, we do not see any advantage of using \eqref{LTE2} over using \eqref{LTE1} for computing $\text{LTE}$ in terms of accuracy versus the number of right-hand side function evaluations.   

In our implementation a step-size $h_n$ is accepted if $\text{LTE}\le\text{TOL}$ and rejected if otherwise. 
If $h_n$ is rejected, then the process is repeated with the new step-size $h_n=h_n/2$ until an 
accepted $h_n$ is found. 
If $h_n$ is accepted, then the step-size $h_{n+1}$ in the next step is defined by
$$
h_{n+1} = h_n \min\bigg\{2,\max\big\{0.5,0.8\big(\text{TOL}/\text{LTE}\big)^{1/(\tilde{p}+1)}\big\}\bigg\}. 
$$
For this formula the ratio $h_{n+1}/h_n$ always stays in the interval $[0.5,2]$.  
This step-size changing technique was also used in \cite{cong01}. 

\section{Numerical experiments}

The numerical experiments in this section were conducted in double precision (machine precision = $0.2\times 10^{-15}$) using MATLAB software running on a computer with 2.2 GHz Intel Core i7 processor and 16 gb of RAM.
\subsection{Derivation of some methods}
\label{derivation}

We implement the new methods with the following sets of collocation parameters $\bm{c}$:

\begin{align*}
\bm{c}_1 &= [0.182647322580547\quad
   0.742402187612118 \quad
   1.474950489807336]^T,\\
\bm{c}_2 &= [   0.138502716885383\quad
   0.605842632479162\quad
   1\quad
   1.588987983968791]^T,\\
\bm{c}_3 &= [0\quad
   0.253662773062501\quad
   0.693421021629012\quad
   1\quad
   1.624344776737066]^T,\\
\bm{c}_4 &= [                   0\quad
   0.160867438838146\quad
   0.475690327561694\quad
   0.809991289295481\quad
   1\quad
   1.664562055415935]^T. 
 \end{align*}
 These sets of collocation parameters $\bm{c}_i$, $i=1,...,4$, are computed to satisfy orthogonality condition \eqref{eq17}.
The GEPTRKN methods based on $\bm{c}_1$, $\bm{c}_2$, $\bm{c}_3$, and $\bm{c}_4$ are of order of accuracy 5, 6, 7, and 8, by Theorem \ref{super2}, respectively. These methods will be denoted by geptrkn5, geptrkn6, geptrkn7, and geptrkn8 when implemented with fixed step-sizes. 
We also implement these methods with variable step-size technique described in Section \ref{subsec:embedded}. 
The embedded methods used with $\bm{c}_1$, $\bm{c}_2$, $\bm{c}_3$, and $\bm{c}_4$ have order of accuracy of 2, 3, 4, and 5, respectively. The variable step-size versions of geptrkn5, geptrkn6, geptrkn7, and geptrkn8 are denoted by geptrkn52, geptrkn63, geptrkn74, and geptrkn85, respectively.  
 
 We also implement a 5-stage GEPTRKN method with the following set of collocation parameters
 \begin{equation*}
\label{coef54}
\begin{split}
\bm{c} = [0.14717733121747\quad
   &0.66145426898123\quad
   1.28305172479853 \\
   &\qquad \qquad \qquad 1.81537781109684\quad
   2.25988885044222]^T.
\end{split}
\end{equation*}
This set of parameters $\bm{c}$ does not satisfy orthogonality condition \eqref{eq17}. 
The corresponding method is of order of accuracy $p=5$ while the embedded method is of order of accuracy $p=4$. 
The obtained method is denoted by geptrkn54.

\subsection{Stability regions}
\label{sec:stabplot}

Figure \ref{figstab1} presents contour plots of the spectral radius of the stability matrix $M(z,\nu)$ for the geptrkn5 (left) and geptrkn6 (right) methods. From Figure \ref{figstab1} one can conclude that the stability regions of these methods are sufficiently large for solving non-stiff equations. 

\begin{figure}
\centerline{%
\includegraphics[scale=0.6]{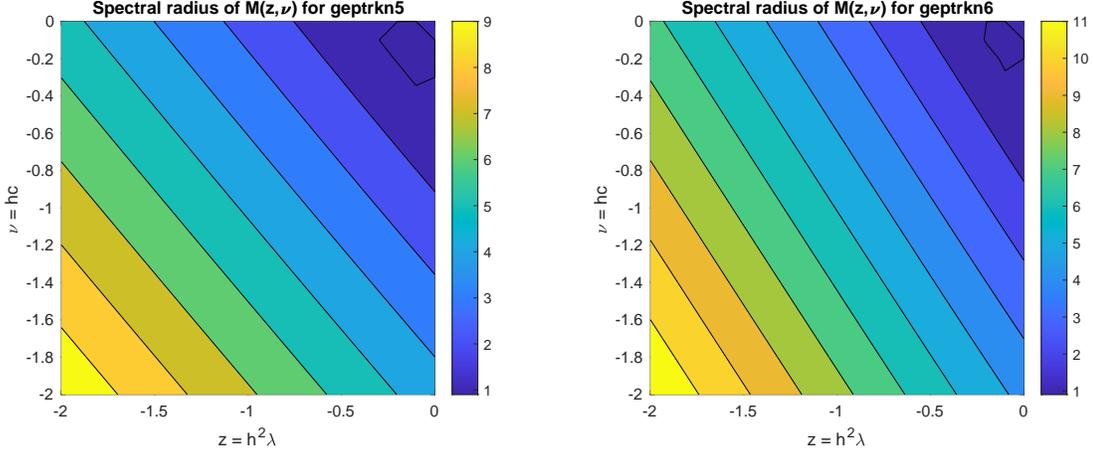}}
\caption{Contour plots of the spectral radius of the stability matrix $M(z,\nu)$ for the geptrkn5 (left) and geptrkn6 (right) methods.}
\label{figstab1}
\end{figure}

Contour plots of the spectral radius of the stability matrix $M(z,\nu)$ for the two methods geptrkn7 and geptrkn8 are provided in Figure \ref{figstab2}. One can see that the higher the order of accuracy of the method is the smaller the stability region it possesses. However, the stability regions of these methods are sufficiently large for solving non-stiff problems. 

\begin{figure}
\centerline{%
\includegraphics[scale=0.6]{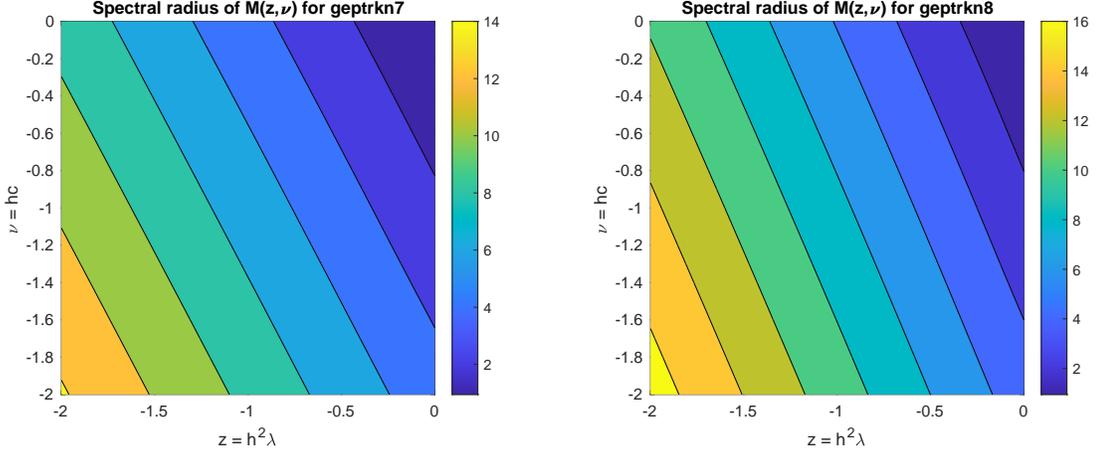}}
\caption{Contour plots of the spectral radius of the stability matrix $M(z,\nu)$ for the geptrkn7 (left) and geptrkn8 (right) methods.}
\label{figstab2}
\end{figure}

The stability region of the geptrkn54 method is similar to that of the geptrkn8 method and is not included in this paper for simplicity. 

\subsection{Test problems}

To test the performance of the new methods, we carried out 
numerical experiments with the following problems:

\begin{itemize}
\item{}
{\bf LINE} -- Consider the following linear equation
\begin{equation}
\label{testeq}
y'' = -cy' - \lambda y - 2\cos(2t) - 4\sin(2t),\quad 0\le t\le t_{end},\quad y(0) = 2,\quad y'(0) = -1.
\end{equation}
When $c = \lambda = 2$, the exact solution to equation \eqref{testeq} is $y(t) = e^{-t}\cos t + \cos(2t)$. In our experiments we used $t_{end} = 10$.
\item{}
{\bf TELE} -- Consider the telegraph equation the most well-known example of (a homogeneous version of) the general wave equation:
\begin{equation}
\label{telegraph}
a^2 u_{xx} = u_{tt} + \gamma u_t + ku.
\end{equation}
Here $u(x, t)$ is the voltage inside a piece of telegraph/transmission wire, whose electrical properties per unit length are: 
resistance $R$, inductance $L$, capacitance $C$, and conductance of leakage current $G$. The constants $a$, $\gamma$, and $k$
are defined by
$$
a^2 = \frac{1}{LC},\qquad \gamma = \frac{G}{C} + \frac{R}{L},\qquad k = \frac{GR}{CL}. 
$$
Equation \eqref{telegraph} can be rewritten as
\begin{equation}
\label{tele2}
u_{tt} = -\gamma u_t - ku + a^2 u_{xx}.
\end{equation}
We solve numerically equation \eqref{tele2} with the following boundary conditions:
$$
u(0,t) = u(1,t) = 0,\quad t\ge 0,\qquad u(x,0) = \sin(\pi x),\quad 0\le x\le 1.
$$
\item{}
{\bf VAND} -- The Van der Pol oscillator problem
\begin{align*}
y'' = \mu(1 - y^2)y' - y,\qquad y(0) = 2,\quad y'(0) = 0,\qquad \mu >0.
\end{align*}
A formula for the solution to the VAND problem is not known. 
This problem is very stiff if $\mu$ is large. However, the problem is non-stiff if $\mu$ is small and  
we use $\mu = 1$ in our experiments. 
The integration domain for this problem is $[0,10]$.
\end{itemize}

\subsection{Results and discussion}

First, we carried out numerical experiments with the four methods geptrkn5, geptrkn6, geptrkn7, and geptrkn8 to verify our super-convergence result in Theorem \ref{super2}.  By the theorem, these methods are expected to have order of accuracy of 5, 6, 7, and 8, respectively, in practice. 
In our experiments, we compute the NCD number which is defined as follows
$$
\text{NCD} = \log_{10} Error,\qquad Error:=\max_{0\le t_n\le T} |y_{i}^{\text{comput}}(t_n) - y_i(t_n)|.
$$
Recall that if a numerical method is of order of accuracy $p$, then $Error \approx Ch^p$. This implies 
$$
NCD(h) = \log_{10} Error \approx \log_{10}(Ch^p) = \log_{10}(C) + p\log_{10}h.
$$
Thus, when the step-size $h$ decreases by half, then we have 
$$
NCD(h/2) \approx \log_{10}(C) + p\log_{10}(h/2) = \log_{10}(C) + p\log_{10}h - p\log_{10}2.
$$
Therefore, when the step-size $h$ is halved, we expect the NCD values decrease by $p\log_{10}2 \approx 0.3\times p$. 

NCD values for the LINE problem generated by the four methods geptrkn5, geptrkn6, geptrkn7, and geptrkn8 
are reported in Table \ref{table1}. We omitted the NCD values from Table \ref{table1} when they reach machine precision, i.e., when $NCD\approx -15$. 
From Table \ref{table1} we can see that the NCD values for the geptrkn5 decrease almost by 1.5 = 0.3*5 when the step-size $h$ is halved. Thus, we conclude that the order of accuracy of the geptrkn5 method is 5. Similarly, when the step-size $h$ decreases by half, the NCD values of the geptrkn6 method decrease almost by 1.8 = 0.3*6. Therefore, the geptrkn6 method has order of accuracy $p=6$. The fact that the order of accuracy of the geptrkn7 and geptrkn8 methods are $p=7$ and $p=8$, respectively, are not clearly seen from the NCD values in Table \ref{table1}. A possible reason for this is: the stability regions of geptrkn7 and geptrkn8 are smaller than those of the other two methods. Due to their high order of accuracy, the NCD values of the geptrkn7 and geptrkn8 methods reach machine precision at larger step-size $h$  compared to the other methods. The conclusion from this experiment is: super-converge is obtained in practice as expected from the theoretical result in Theorem \ref{super2}.

\begin{table}[ht] 
\caption{NCD values for the LINE problem}
\label{table1}
\centering
\small
\renewcommand{\arraystretch}{1.2}
\begin{tabular}{@{}l@{}|c@{\hspace{2mm}}|c@{\hspace{2mm}}|c@{\hspace{2mm}}|c@{\hspace{2mm}}|c@{\hspace{2mm}}|
c@{\hspace{2mm}}c@{\hspace{2mm}}|}
\hline
&$h$	&geptrkn5 & geptrkn6& geptrkn7 &geptrkn8 \\
\hline
&$1/2^2$  &-1.3  & 0.2  &-0.0  & 0.6\\ 
&$1/2^3$  &-4.3  &-5.6  &-6.7  &-8.3\\ 
&$1/2^4$  &-5.7  &-7.2  &-8.6  &-10.2\\ 
&$1/2^5$  &-7.1  &-9.0  &-10.5  &-12.4\\ 
&$1/2^6$  &-8.6  &-10.7  &-12.5  &-14.6\\ 
&$1/2^7$  &-10.1  &-12.5  &-14.6  &--\\ 
&$1/2^8$  &-11.6  &-14.2  &--  &--\\ 
&$1/2^9$  &-13.1  &--  &--  &--\\ 
&$1/2^{10}$  &-14.4  &--  &--  &--\\ 
\hline
\end{tabular} 
\end{table}

In the following experiments we will compare the performance of the variable step-size implementation of the methods derived in Section \ref{derivation}, the embedded pair explicit pseudo two-step Runge-Kutta methods cong5 proposed in \cite{CPW98}, and the MATLAB function ode45 on the LINE, TELE, and VAND problems.

The errors reported in Figures \ref{figLINE}, \ref{figTELE}, and \ref{figVAND} are computed as follows
$$
\text{Error} = \sqrt{\sum_{i=1}^k\big(y^{\text{comput}}_i(t_{\text{end}}) - y_i(t_{\text{end}})\big)^2}
$$
where $k$ is the dimension of the ODE system to solve. In our experiments, $k =1$ for the LINE and VAND problems while $k = 10$ for the TELE problem.

Figure \ref{figLINE} plots the numerical errors versus the number of right-hand side function evaluations (NFE) 
for the seven methods geptrkn52, geptrkn63, geptrkn74, geptrkn85, geptrkn54, ode45, and cong5. 
From Figure \ref{figLINE} we can see that the geptrkn52 method is of order of accuracy 5 as its numerical error curve is almost parallel to that of the ode45. The geptrkn63 method has a numerical error curve with a steeper slope than those of the numerical error curves of the ode45 and the eptrkn52 methods. Thus, the geptrkn63 method has a higher order of accuracy which is $p=6$. Similarly, one can see that the geptrkn74 and geptrkn85 methods are of order of accuracy $p=7$ and $p=8$, respectively. Although the geptrkn54 method is of accuracy order $p=5$, it is the most efficient method in the experiment. This doesn't disagree with what has been seen in the literature. Specifically, among RK methods sharing the same order of accuracy, the one using the least stages does not necessarily yield the best numerical results. Also, from Figure \ref{figLINE} one can see that except for the geptrkn52 and geptrkn63 methods, all other methods are more efficient than the ode45 method. 

\begin{figure}
\centerline{%
\includegraphics[scale=0.6]{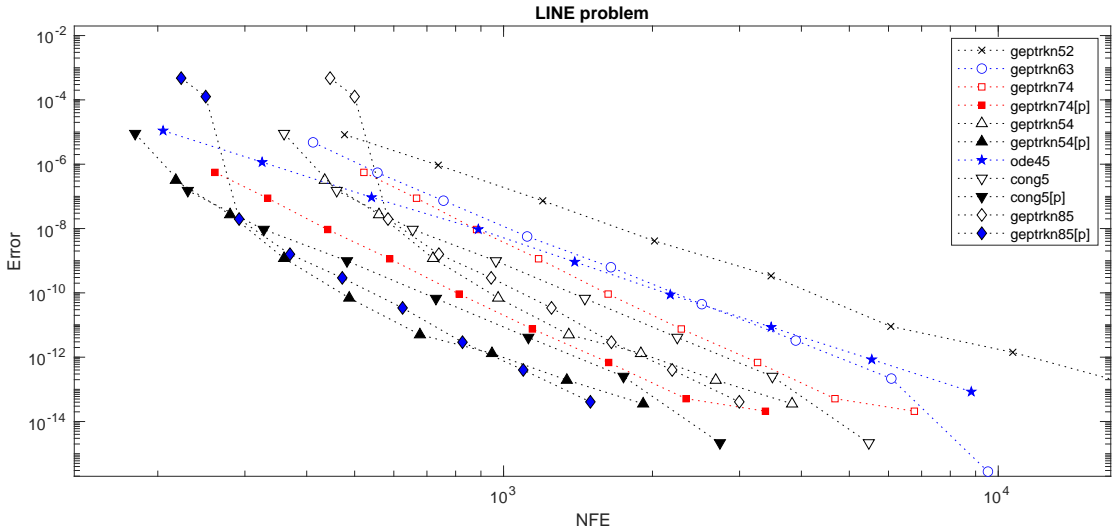}}
\caption{Numerical results for the LINE problem.}
\label{figLINE}
\end{figure}

For the TELE problem, equation \eqref{telegraph} was discretized by the method of lines by means of spectral methods in \cite{Tre}. Specifically, we used Chebyshev-Gauss-Lobatto points to discretize the interval [0, 1] and obtained a system of 10 second-order ordinary differential equations. Then we carried out numerical experiments with this system.

Figure \ref{figTELE} plots the numerical results for the TELE problem with $a = 0.01$ and $\gamma=1$. 
When high accuracy is required the 3 methods geptrkn74, geptrkn54, and geptrkn63 yield better results than do the ode45 and cong5 methods. The geptrkn54 is the best method in this experiment. The slopes of the error curves from the new methods agree with the theoretical result on order of accuracy in Theorem \ref{super2}. Namely, the error curve of the geptrkn52 method has a similar slope as that of the error curve of the ode45. In addition, if implemented in parallel computing environments, the new methods will be much better than the ode45 method. Here we assume that the speedup factor is only 2 even thought it has been observed from experiments that the speedup factor is often greater than 2 \cite{cong00}.
\begin{figure}
\centerline{%
\includegraphics[scale=0.6]{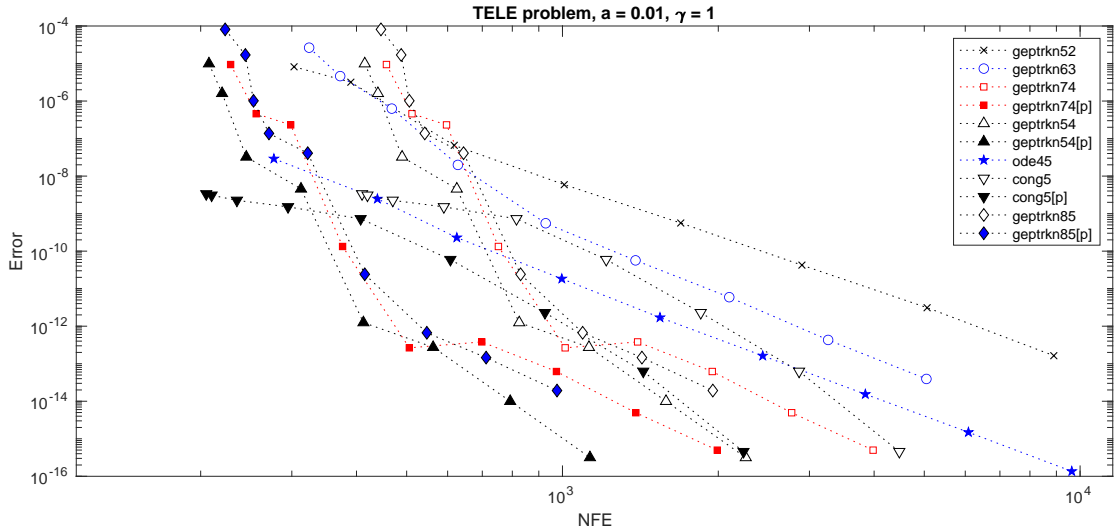}}
\caption{Numerical results for the TELE problem when $a=0.01$ and $\gamma=1$.}
\label{figTELE}
\end{figure}

Figure \ref{figVAND} presents numerical results for the VAND problem with $\mu=1$. For this value of $\mu$, the VAND problem is a non-stiff one. It is well-known that the VAND problem is a very stiff one if $\mu$ is large. Again, it can be seen from Figure \ref{figVAND} that the two methods  geptrkn85, and geptrkn54 are superior to the ode45 and cong5 methods. 

\begin{figure}
\centerline{%
\includegraphics[scale=0.6]{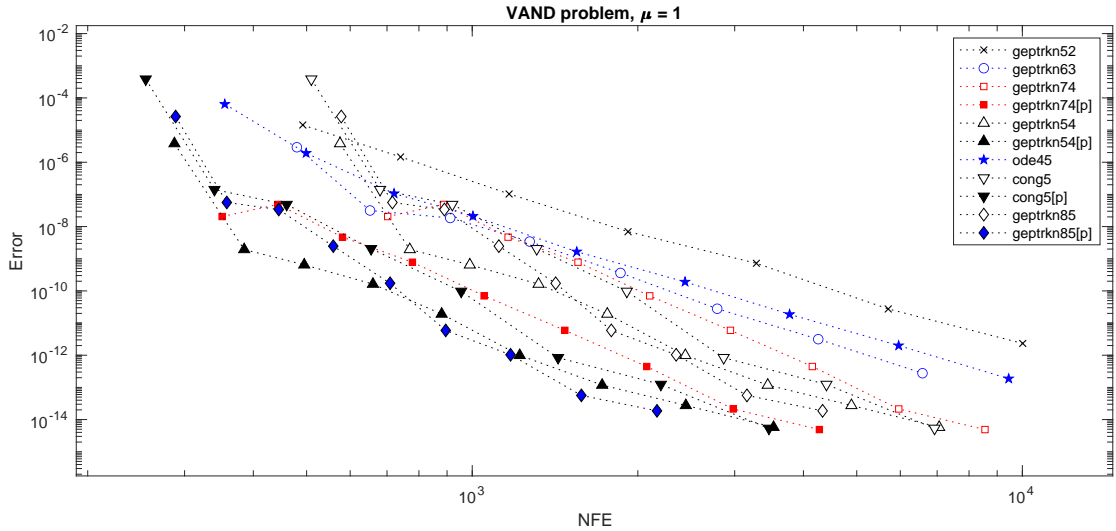}}
\caption{Numerical results for the VAND problem when $\mu=1$.}
\label{figVAND}
\end{figure}

The conclusion from the experiments above is: the new methods are superior to the ode45 and the cong5 methods for solving non-stiff problems even in sequential computing environments. If implemented in parallel computing environments, the new methods will be much more efficient. In addition, the numerical results agree with our theoretical results on order of accuracy of the new methods.

\section{Concluding remarks}

A new class of generalized explicit pseudo two-step RKN (GEPTRKN) methods has been developed and  studied in this paper. The new methods are applicable to second-order initial value problems in general form \eqref{sys}. When the first derivative $y'$ is absent from the right-hand side function $f(t,y,y')$, the new methods reduce to the classical explicit pseudo two-step RKN methods \cite{cong99}. We proved that an $s$-stage GEPTRKN method has order of accuracy $p=s$ for any set of collocation parameters $(c_i)_{i=1}^s$. When the set of collocation parameters $(c_i)_{i=1}^s$ satisfies some orthogonality conditions, the corresponding method can attain order of accuracy $p=s+2$. 
The theoretical super-convergence results in the paper have been confirmed by our numerical experiments. Numerical comparisons among the new methods, the explicit pseudo two-step RK method cong5, and the MATLAB function ode45 have shown that the new methods are more efficient for solving non-stiff second-order initial value problems. 
Since GEPTRKN methods have the structure of EPTRKN methods, they will even be much more efficient when implemented on parallel computing environments.

\section*{Declarations and statements}

\begin{itemize}
\item{}
Data sharing not applicable to this article as no datasets were generated or analyzed during the current study.
\item{}
The authors declare that they have no conflict of interest.
\end{itemize}






\end{document}